\let\oldtocsection=\tocsection
\let\oldtocsubsection=\tocsubsection
\let\oldtocsubsubsection=\tocsubsubsection
\renewcommand{\tocsection}[2]{\hspace{0em}\oldtocsection{#1}{#2}}
\renewcommand{\tocsubsection}[2]{\hspace{1em}\oldtocsubsection{#1}{#2}}
\renewcommand{\tocsubsubsection}[2]{\hspace{2em}\oldtocsubsubsection{#1}{#2}}
\theoremstyle{plain}
\newtheorem{Theo}{Theorem}[section]
\newtheorem{Prop}[Theo]{Proposition}
\newtheorem{Coro}[Theo]{Corollary}
\newtheorem{Lemm}[Theo]{Lemma}
\theoremstyle{definition}
\newtheorem{Exam}[Theo]{Example}
\newtheorem{Examples}[Theo]{Examples}
\newtheorem{Rema}[Theo]{Remark}
\newcommand{\conv}{\operatorname{conv}}
\newcommand{\Id}{\operatorname{Id}}
\newcommand{\eps}{\varepsilon}
\newcommand{\re}{\operatorname{Re}}
\newcommand{\e}{\operatorname{e}}
\newcommand{\N}{\mathbb{N}}
\renewcommand{\geq}{\geqslant}
\renewcommand{\leq}{\leqslant}
\numberwithin{equation}{section}
\begin{document}

\title{Numerical index and Daugavet property of operator ideals and tensor products}

\author[M.~Mart\'{\i}n]{Miguel Mart\'{\i}n}
\author[J.~Mer\'{\i}]{Javier Mer\'{\i}}
\author[A.~Quero]{Alicia Quero}
\address{Universidad de Granada \\ Facultad de Ciencias \\
Departamento de An\'{a}lisis Matem\'{a}tico \\ E-18071 Granada \\
Spain}
\email{mmartins@ugr.es, jmeri@ugr.es, aliciaquero@ugr.es}

\thanks{Research partially supported by projects PGC2018-093794-B-I00 (MCIU/AEI/FEDER, UE) and FQM-185 (Junta de Andaluc\'{\i}a/FEDER, UE). The third author is also supported by the Ph.D. scholarship FPU18/03057 (MECD)}

\thispagestyle{plain}

\date{May 22nd, 2020}

\keywords{Banach space; numerical index; numerical range; numerical radius; operator ideal; projective and injective tensor product; Daugavet property; slicely countably determined sets and operators}
\subjclass[2010]{Primary 46B04, 47A12; Secondary 46B20, 46B28, 47B07}

\maketitle

\begin{abstract}
We show that the numerical index of any operator ideal is less than or equal to the minimum of the numerical indices of the domain and the range. Further, we show that the numerical index of the ideal of compact operators or the ideal of weakly compact operators is less than or equal to the numerical index of the dual of the domain, and this result provides interesting examples. We also show that the numerical index of a projective or injective tensor product of Banach spaces is less than or equal to the numerical index of any of the factors. Finally, we show that if a projective tensor product of two Banach spaces has the Daugavet property and the unit ball of one of the factor is slicely countably determined or its dual contains a point of Fr\'{e}chet differentiability of the norm, then the other factor inherits the Daugavet property. If an injective tensor product of two Banach spaces has the Daugavet property and one of the factors contains a point of Fr\'{e}chet differentiability of the norm, then the other factor has the Daugavet property.
\end{abstract}

\section{Introduction}
The numerical index of a Banach space is a constant that relates the numerical radius and the norm of bounded linear operators on the space. It was introduced by G.~Lumer in 1968 (see \cite{D-Mc-P-W}). Let us present the needed definitions and notation. Given a Banach space $X$, we write $S_X$ and $B_X$ to denote, respectively, the unit sphere and the closed unit ball of the space. By $X^\ast$ we denote the topological dual of $X$ and $\mathcal{L}(X)$ will denote the Banach space of all bounded linear operators on $X$. The \emph{numerical range} of an operator $T\in \mathcal{L}(X)$ is the set of scalars given by
$$
V(T):=\{x^\ast(Tx)\colon x\in S_X,\,x^\ast\in S_{X^\ast}, x^\ast(x)=1\},
$$
and the \emph{numerical radius} of $T$ is then given by
$$
v(T):=\sup\{|\lambda|\colon \lambda \in V(T)\}.
$$
It is clear that the numerical radius is a seminorm on $\mathcal{L}(X)$ which is not greater than the operator norm. Very often, the numerical radius is actually an equivalent norm on $\mathcal{L}(X)$ and to quantify this fact it is used the \emph{numerical index} of the space $X$:
\begin{align*}
  n(X)& :=  \inf\{v(T)\colon T\in \mathcal{L}(X),\,\|T\|=1\} \\
   & = \max\{k\geq 0\colon k\|T\|\leq v(T)\,\forall T\in \mathcal{L}(X)\}.
\end{align*}
It is clear that $0\leq n(X)\leq 1$; the value $n(X)=1$ means that the numerical radius and the norm coincide, while $n(X)=0$ when the numerical radius is not an equivalent norm on $\mathcal{L}(X)$. We refer the reader to the expositive paper \cite{KaMaPa}, to Chapter~$1$ of the recent book \cite{SpearsBook}, and to Subsection~1.1 of the very recent paper \cite{secondnumericalindex}. Some results on numerical index which we would like to emphasize are the following. For every Banach space $X$, $n(X^\ast)\leq n(X)$ and the inequality can be strict; $n(c_0)=n(\ell_1)=n(\ell_\infty)=1$, a result which is also valid for all $L$- and $M$-spaces, the disk algebra, and $H^\infty$. The numerical index behaves differently when dealing with real or complex Banach spaces. For instance, Hilbert spaces of dimension greater than or equal to two have numerical index $0$ in the real case and $1/2$ in the complex case. In general, if $X$ is a complex Banach space, then $n(X)\geq 1/\e$ and all the values in the interval $[1/\e,1]$ are valid; for real Banach spaces, there is no restriction and all the values of the interval $[0,1]$ are possible. The numerical index of $L_p$ spaces for $1<p<\infty$, $p\neq 2$, is still unknown, but it is known that $n(L_p(\mu))>0$ in the real case for $p\neq 2$. All these results can be found in the cited papers \cite{SpearsBook,KaMaPa,secondnumericalindex}. Some recent results can be found in \cite{MeriQuero-lp}, where the exact value of some two-dimensional $\ell_p$ spaces is calculated, and in \cite{BothBaker,AksoyLewi,SainPaulBhuniaBag}, for instance. Different extensions of the concept of numerical index appear in \cite{KMMPQ} and \cite{WandHuangTan}.

There is a property somehow related to the numerical index called Daugavet property. A Banach space $X$ has the \emph{Daugavet property} \cite{KSSW} if the norm equality
\begin{equation}\label{DE}\tag{DE}
\|\Id+T\|=1+\|T\|
\end{equation}
holds for all rank-one operators $T\in \mathcal{L}(X)$ and, in this case, the same happens for all weakly compact operators on $X$. Examples of Banach spaces satisfying this property are $L_1(\mu,Y)$ when the positive measure $\mu$ is atomless and $Y$ is arbitrary, $C(K,Y)$ when the compact space $K$ is perfect and $Y$ is arbitrary, or the disk algebra. Let us say that there is a relation between the Daugavet property and the numerical range of operators: an operator $T$ satisfies \eqref{DE} if and only if $\sup\re V(T)=\|T\|$ (see \cite{D-Mc-P-W} for instance). Classical references for Daugavet property include \cite{KSSW,Shvydkoy,Werner97}. For very recent results, we refer the reader to \cite{BrachSanchezWerner, RuedaTradaceteVillanueva}, for instance.

To state the results of the paper, we need to introduce some definitions and notation. Given Banach spaces $X$ and $Y$, we write $\mathcal{L}(X,Y)$, $\mathcal{K}(X,Y)$, $\mathcal{W}(X,Y)$, and $\mathcal{A}(X,Y)$ to denote, respectively, the space of (bounded linear) operators, compact operators, weakly compact operators, and approximable operators (i.e.\ norm limits of finite rank operators), all of them endowed with the operator norm. Finally, we consider the space of all nuclear operators: an operator $T\colon X\longrightarrow Y$ between Banach spaces is called \textit{nuclear} if there exist $x_n^\ast\in X$ and $y_n\in Y$ for every $n\in\N$ such that $\sum_{n=1}^\infty\|x_n^\ast\|\,\|y_n\|<\infty$ and
$$Tx=\sum_{n=1}^\infty x_n^\ast(x)y_n \qquad (x\in X).$$
The space of all nuclear operators, denoted by $\mathcal{N}(X,Y)$, is a Banach space endowed with the norm
$$N(T)=\inf\left\{\sum_{n=1}^\infty \|x_n^\ast\|\,\|y_n\|\colon Tx=\sum_{n=1}^\infty x_n^\ast(x)y_n \right\},$$
where the infimum is taken over all the representations of $T$ as above. The \textit{projective tensor product} of $X$ and $Y$, denoted by $X\hat{\otimes}_\pi Y$, is the completion of $X\otimes Y$ under the norm given by
$$
\|u\|_\pi=\inf\left\{\sum_{i=1}^n\|x_i\|\,\|y_i\|\colon u=\sum_{i=1}^n x_i\otimes y_i \right\},
$$
where the infimum is taken over all the representations of $u=\sum_{i=1}^n x_i\otimes y_i$. It follows from the definition that $B_{X\hat{\otimes}_\pi Y}=\overline{\conv}(B_X\otimes B_Y)$. The projective tensor product of two operators $S\in \mathcal{L}(X,W)$ and $T\in \mathcal{L}(Y,Z)$ between Banach spaces, denoted by $S\otimes_\pi T$, is the unique operator between $X\hat{\otimes}_\pi Y$ and $W\hat{\otimes}_\pi Z$ such that $(S\otimes_\pi T)(x\otimes y)=Sx\otimes Ty$ for every $x\in X$ and $y\in Y$, which also satisfies that $\|S\otimes_\pi T\|=\|S\|\,\|T\|$. The \textit{injective tensor product} of $X$ and $Y$, denoted by $X\hat{\otimes}_\eps Y$, is the completion of $X\otimes Y$ under the norm given by
$$
\|u\|_\eps=\sup\left\{ \left|\sum_{i=1}^n x^\ast(x_i)y^\ast(y_i)\right|\colon x^\ast\in B_{X^\ast},\, y^\ast\in B_{Y^\ast} \right\},
$$
where $\sum_{i=1}^n x_i\otimes y_i$ is any representation of $u$. The injective tensor product of two operators $S\in \mathcal{L}(X,W)$ and $T\in \mathcal{L}(Y,Z)$ between Banach spaces, denoted by $S\otimes_\eps T$, is the unique operator between $X\hat{\otimes}_\eps Y$ and $W\hat{\otimes}_\eps Z$ such that $(S\otimes_\eps T)(x\otimes y)=Sx\otimes Ty$ for every $x\in X$ and $y\in Y$, which also satisfies that $\|S\otimes_\eps T\|=\|S\|\,\|T\|$. We refer the reader to \cite{DefantFloret} and \cite{Ryan} for more information and background about ideals of operators and tensor products of Banach spaces.

For ideals of operators, we show in Section~\ref{section:Numindex-ideals} that for every operator ideal $\mathcal{Z}$ of $\mathcal{L}(X,Y)$ endowed with the operator norm we have that $n(\mathcal{Z})\leq \min\{n(X),n(Y)$. In the case of compact and weakly compact operators, we may improve this inequality to
$$
n(\mathcal{K}(X,Y))\leq \min\{n(X^\ast),n(Y)\},\qquad n(\mathcal{W}(X,Y))\leq \min\{n(X^\ast),n(Y)\}.
$$
This result allows us to present some interesting examples as the existence of a real Banach space $X$ such that $n(X)=1$ while $n(\mathcal{K}(X,Y))=n(\mathcal{W}(X,Y))=0$ for every Banach space $Y$. In particular, $n(X)=1$ while $n(\mathcal{K}(X,X))=n(\mathcal{W}(X,X))=0$.

For tensor products of Banach spaces, we prove in Section~\ref{section:tensorproduct-numindex} that the numerical indices of $X\hat{\otimes}_\pi Y$ and $X\hat{\otimes}_\eps Y$ are less than or equal to the minimum of $n(X)$ and $n(Y)$. As a consequence, and just using representation theorems, we get some consequences for the space of approximable operators and for the space of nuclear operators:
$$
n(\mathcal{A}(X,Y))\leq \min\{n(X^\ast),n(Y)\}
$$
and, in the case where $X^\ast$ or $Y$ has the approximation property,
$$
n(\mathcal{N}(X,Y))\leq \min\{n(X^\ast),n(Y)\}.
$$

Finally, we study in Section~\ref{section:Daugavet} the Daugavet property of tensor products of Banach spaces. We show that when $X\hat{\otimes}_\pi Y$ has the Daugavet property and $B_Y$ is a slicely countably determined set (see the definition at the beginning of the section), then $X$ has the Daugavet property. We also provide with the analogous result in the case where the space $Y^\ast$ has a point of Fr\'{e}chet differentiability of the norm. For injective tensor products, we do not know if the result with the hypothesis of slicely countably determined unit ball is true or not, but there is a positive result when the space $Y$ has a point of Fr\'{e}chet differentiability of the norm.

\section{Numerical index of some operator ideals of $\mathcal{L}(X,Y)$}\label{section:Numindex-ideals}
Given two Banach spaces $X$ and $Y$, we first study the relationship between the numerical index of some operator ideals of $\mathcal{L}(X,Y)$ and the numerical indices of the spaces $X$ and $Y$. Recall that an \emph{operator ideal} in $\mathcal{L}(X,Y)$ is a closed subspace $\mathcal{Z}$ containing all finite-rank operators and satisfying that $ATB\in \mathcal{Z}$ whenever $A\in \mathcal{L}(Y)$, $T\in \mathcal{Z}$, and $B\in \mathcal{L}(X)$.

\begin{Prop}\label{num-idex-Ldex}
Let $X$, $Y$ be Banach spaces, then $n\big(\mathcal{L}(X,Y)\big)\leq \min\{n(X),n(Y)\}$. Moreover, the same happens to every operator ideal $\mathcal{Z}\leq \mathcal{L}(X,Y)$ endowed with the operator norm, that is, $n(\mathcal{Z})\leq \min\{n(X),n(Y)\}$.
\end{Prop}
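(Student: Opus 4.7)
The plan is to reduce computations about the numerical index of $\mathcal{Z}$ to computations about the numerical indices of the factors $X$ and $Y$ by transferring each operator $T\in\mathcal{L}(X)$ (resp.\ $T\in\mathcal{L}(Y)$) into a suitable multiplication operator on $\mathcal{Z}$ whose norm and numerical radius agree with those of $T$.

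For the inequality $n(\mathcal{Z})\leq n(X)$, I would fix $T\in\mathcal{L}(X)$ and consider the right-multiplication operator $\Phi_T\colon\mathcal{Z}\to\mathcal{Z}$ defined by $\Phi_T(S)=ST$. This is well-defined and linear because $\mathcal{Z}$ is an operator ideal in the sense recalled right before the proposition. The key computation is to show that for every scalar $\beta$,
\[
\|\Id_{\mathcal{Z}}+\beta\,\Phi_T\|=\|\Id_X+\beta T\|.
\]
The upper bound is immediate since $(\Id_{\mathcal{Z}}+\beta\Phi_T)(S)=S(\Id_X+\beta T)$ gives $\|S(\Id_X+\beta T)\|\leq\|S\|\,\|\Id_X+\beta T\|$. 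For the matching lower bound, test on the rank-one operators $S=y\otimes x^\ast$ with $x^\ast\in S_{X^\ast}$ and $y\in S_Y$; these lie in $\mathcal{Z}$ since $\mathcal{Z}$ contains all finite-rank operators. A direct calculation yields $\|S(\Id_X+\beta T)\|=\|x^\ast\circ(\Id_X+\beta T)\|$, and taking the supremum over $x^\ast\in S_{X^\ast}$ recovers $\|(\Id_X+\beta T)^\ast\|=\|\Id_X+\beta T\|$.

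With the norm identity in hand, setting $\beta=0$ in the appropriate limit and appealing to the classical formula
\[
v(U)=\sup_{|\lambda|=1}\,\lim_{\alpha\to 0^+}\frac{\|\Id+\alpha\lambda U\|-1}{\alpha}
\]
(or its real analogue with $\lambda\in\{-1,1\}$), together with the obvious identity $\lambda\Phi_T=\Phi_{\lambda T}$, I obtain simultaneously that $\|\Phi_T\|=\|T\|$ and $v(\Phi_T)=v(T)$. Consequently
\[
n(\mathcal{Z})\,\|T\|=n(\mathcal{Z})\,\|\Phi_T\|\leq v(\Phi_T)=v(T),
\]
and taking the infimum over $T\in\mathcal{L}(X)$ with $\|T\|=1$ yields $n(\mathcal{Z})\leq n(X)$.

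For the inequality $n(\mathcal{Z})\leq n(Y)$, I would repeat the argument verbatim using the left-multiplication operator $\Psi_T(S)=TS$ for $T\in\mathcal{L}(Y)$; testing on the same rank-one elements $y\otimes x^\ast$ delivers $\|\Id_{\mathcal{Z}}+\beta\Psi_T\|=\|\Id_Y+\beta T\|$ and hence $\|\Psi_T\|=\|T\|$, $v(\Psi_T)=v(T)$. The case of the full space $\mathcal{L}(X,Y)$ is just the special case $\mathcal{Z}=\mathcal{L}(X,Y)$. The only subtle point is the rank-one lower bound computation, which does however require both the ideal property and the presence of finite-rank operators; everything else is book-keeping and the standard limit formula for the numerical radius.
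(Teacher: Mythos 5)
Your proof is correct and takes essentially the same route as the paper: the same composition operators $S\mapsto S\circ T$ (for $T\in\mathcal{L}(X)$) and $S\mapsto T\circ S$ (for $T\in\mathcal{L}(Y)$), with the same rank-one test operators to establish the isometry. The only difference is that you inline, via the Bohnenblust--Karlin limit formula for the numerical radius, the content of the auxiliary lemma the paper cites (that a unital isometric embedding of $\mathcal{L}(X)$ into $\mathcal{L}(\mathcal{Z})$ forces $n(\mathcal{Z})\leq n(X)$).
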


To give the proof of the proposition, we need the following lemma which is well known and can be deduced, for instance, from \cite[Corollary~2.1.2]{Cabrera-Rodriguez}.

\begin{Lemm}\label{lemma:isometric-inclusion-num-index}
Let $X_1$, $X_2$ be Banach spaces and suppose that there is an isometric embedding $\Phi\colon \mathcal{L}(X_1)\longrightarrow \mathcal{L}(X_2)$ satisfying $\Phi(\Id_{X_1})=\Id_{X_2}$. Then, $n(X_2)\leq n(X_1)$.
\end{Lemm}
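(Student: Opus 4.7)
The plan is to reduce the lemma to a general fact about unital Banach algebras via the Bonsall--Duncan theory of the algebra numerical range. Recall that for a unital Banach algebra $(A,e)$ with $\|e\|=1$, one sets $\mathcal{S}(A,e):=\{f\in A^\ast:\|f\|=1=f(e)\}$ (the set of \emph{states}) and defines the algebra numerical range of $a\in A$ as $V_A(a):=\{f(a):f\in\mathcal{S}(A,e)\}$. A classical identification, contained in the reference \cite[Corollary~2.1.2]{Cabrera-Rodriguez} cited just before the lemma, is that for $A=\mathcal{L}(X)$ with $e=\Id_X$ the radius of $V_A(T)$ agrees with the spatial numerical radius $v(T)$ from the introduction. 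Granting this, the entire problem reduces to showing that $v(T)=v(\Phi(T))$ for every $T\in\mathcal{L}(X_1)$.

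For this equality I would transfer states back and forth along $\Phi$. First, given a state $g\in\mathcal{S}(\mathcal{L}(X_2),\Id_{X_2})$, the functional $g\circ\Phi$ on $\mathcal{L}(X_1)$ has norm at most $\|\Phi\|=1$ and sends $\Id_{X_1}$ to $g(\Phi(\Id_{X_1}))=g(\Id_{X_2})=1$, which forces it to have norm exactly one; hence it is a state of $\mathcal{L}(X_1)$, and $g(\Phi(T))\in V_{\mathcal{L}(X_1)}(T)$ for every $T$, giving $V_{\mathcal{L}(X_2)}(\Phi(T))\subseteq V_{\mathcal{L}(X_1)}(T)$. Conversely, a state $f$ of $\mathcal{L}(X_1)$ can be transported through the isometry $\Phi$ to a norm-one functional on the image $\Phi(\mathcal{L}(X_1))\subseteq\mathcal{L}(X_2)$ that takes value $1$ at $\Id_{X_2}$; extending it by Hahn--Banach to a norm-one functional $\tilde f$ on $\mathcal{L}(X_2)$ preserves the property $\tilde f(\Id_{X_2})=1$, so $\tilde f$ is a state of $\mathcal{L}(X_2)$ satisfying $\tilde f(\Phi(T))=f(T)$, which yields the reverse inclusion. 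Therefore $V_{\mathcal{L}(X_1)}(T)=V_{\mathcal{L}(X_2)}(\Phi(T))$, and in particular $v(T)=v(\Phi(T))$.

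With this equality and $\|\Phi(T)\|=\|T\|$ in hand, the conclusion is formal: for any $T\in\mathcal{L}(X_1)$ with $\|T\|=1$, $\Phi(T)\in\mathcal{L}(X_2)$ also has norm one, so by the definition of the numerical index $v(T)=v(\Phi(T))\geq n(X_2)$; taking the infimum over such $T$ gives $n(X_1)\geq n(X_2)$. The only non-routine ingredient is the identification of the spatial and algebra numerical radii for $\mathcal{L}(X)$, which is the classical fact I would import from \cite[Corollary~2.1.2]{Cabrera-Rodriguez} rather than reprove; the remaining steps are just a Hahn--Banach extension together with the unital isometry hypothesis.
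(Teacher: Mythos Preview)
Your argument is correct and is precisely the deduction the paper has in mind: the paper does not give its own proof but only remarks that the lemma ``can be deduced, for instance, from \cite[Corollary~2.1.2]{Cabrera-Rodriguez}'', which is exactly the identification of the spatial numerical radius with the Bonsall--Duncan algebra numerical radius that you invoke, together with the Hahn--Banach invariance of the algebra numerical range under unital isometric linear embeddings. There is nothing to add.
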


\begin{proof}[Proof of Proposition~\ref{num-idex-Ldex}]
	We first show that $n\big(\mathcal{L}(X,Y)\big)\leq n(X)$. Fixed $J\in\mathcal{L}(X)$, we define the map $\Phi_J \colon \mathcal{L}(X,Y) \longrightarrow \mathcal{L}(X,Y)$ by $\Phi_J(T)=T\circ J$ for every $T\in\mathcal{L}(X,Y)$ and observe that $\|\Phi_J\|=\|J\|$. Indeed, the inequality $\|\Phi_J\|\leq \|J\|$ is evident. To prove the reverse one, given $\eps>0$, we find $x_\eps\in S_X$ satisfying $\|Jx_\eps\|>\|J\|-\eps$ and then we take $x^\ast_\eps\in S_{X^\ast}$ such that $x^\ast_\eps(Jx_\eps)=\|Jx_\eps\|>\|J\|-\eps$. We fix $y_0\in S_Y$ and define the rank-one operator $T_\eps\in \mathcal{L}(X,Y)$ by $T_\eps(x)=x^\ast_\eps(x)y_0$ for every $x\in X$, which satisfies $\|T_\eps\|=1$ and
	$$
	\|\Phi_J(T_\eps)\|=\|T_\eps\circ J\|\geq\|[T_\eps\circ J](x_\eps)\|=\|x^\ast_\eps(Jx_\eps)y_0\|>\|J\|-\eps.
	$$
	Therefore $\|\Phi_J\|\geq\|J\|$, and hence the mapping $J\longmapsto \Phi_J$ is an isometric embedding from $\mathcal{L}(X)$ to $\mathcal{L}\big(\mathcal{L}(X,Y)\big)$ carrying $\Id_X$ to $\Id_{\mathcal{L}(X,Y)}$, so the inequality $n\big(\mathcal{L}(X,Y)\big)\leq n(X)$ follows by Lemma~\ref{lemma:isometric-inclusion-num-index}. The inequality $n\big(\mathcal{L}(X,Y)\big)\leq n(Y)$ can be proved analogously, using $\Psi_S(T)=S\circ T$ instead of $\Phi_J$.
	
	To prove the moreover part it suffices to observe that if $T\in \mathcal{Z}\subset \mathcal{L}(X,Y)$ and $J\in \mathcal{L}(X)$, then $\Phi_J(T)=T\circ J$ belongs to $\mathcal{Z}$ for every $T\in \mathcal{Z}$, as $\mathcal{Z}$ is an operator ideal. So the map $J\longmapsto \Phi_J$ is an isometric embedding from $\mathcal{L}(X)$ to $\mathcal{L}(\mathcal{Z})$ carrying $\Id_X$ to $\Id_\mathcal{Z}$, and the result follows again by Lemma~\ref{lemma:isometric-inclusion-num-index}. For the inequality involving $n(Y)$, the argument is analogous, considering now that $\Psi_S(T)=S\circ T\in \mathcal{Z}$ for every $T\in \mathcal{Z}$ and so the map $S\longmapsto \Psi_S$ is an isometric embedding from $\mathcal{L}(Y)$ to $\mathcal{L}(\mathcal{Z})$ carrying $\Id_Y$ to $\Id_{\mathcal{Z}}$.
	\end{proof}

We can get a stronger result for the numerical indices of $\mathcal{K}(X,Y)$ and $\mathcal{W}(X, Y)$. To do so, we recall that $\mathcal{K}_{w^\ast}(X^\ast,Y)$ denotes the space of compact operators that are weak$^\ast$-weakly continuous from $X^\ast$ into $Y$ endowed with the usual operator norm. This space was originally introduced by L.~Schwartz \cite{Schwartz} as the $\eps$-product of the spaces $X$ and $Y$. It is well-known that $\mathcal{K}_{w^\ast}(X^{\ast},Y)\equiv \mathcal{K}_{w^\ast}(Y^{\ast},X)$ and that $\mathcal{K}(X,Y)$ can be identified with $\mathcal{K}_{w^\ast}(X^{\ast\ast},Y)$ using the mapping $T\longmapsto T^{\ast\ast}$. Analogously, $\mathcal{L}_{w^\ast}(X^\ast,Y)$ denotes the space of operators that are weak$^\ast$-weakly continuous from $X^\ast$ into Y. Finally, we recall that $\mathcal{W}(X,Y)$ can be identified with $\mathcal{L}_{w^\ast}(X^{\ast\ast},Y)$. We refer the reader to \cite{Ruess, Schwartz} for background on this type of spaces.

\begin{Theo}\label{thm-num-index-compact-wcompact}
Let $X$, $Y$ be Banach spaces, then the following hold:
\begin{itemize}
	\item[(a)] $n\big(\mathcal{L}_{w^\ast}(X^\ast,Y)\big)\leq \min\{n(X),n(Y)\}$.
	\item[(b)] $n\big(\mathcal{K}_{w^\ast}(X^\ast,Y)\big)\leq \min\{n(X),n(Y)\}$.
	\item[(c)] $n\big(\mathcal{W}(X,Y)\big)\leq \min\{n(X^\ast),n(Y)\}$.
	\item[(d)] $n\big(\mathcal{K}(X,Y)\big)\leq \min\{n(X^\ast),n(Y)\}$.
\end{itemize}
\end{Theo}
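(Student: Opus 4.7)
The plan is to adapt the isometric-embedding strategy of Proposition~\ref{num-idex-Ldex}, combined with Lemma~\ref{lemma:isometric-inclusion-num-index}, and then use the standard identifications $\mathcal{W}(X,Y)\equiv \mathcal{L}_{w^\ast}(X^{\ast\ast},Y)$ and $\mathcal{K}(X,Y)\equiv \mathcal{K}_{w^\ast}(X^{\ast\ast},Y)$ (given by $T\mapsto T^{\ast\ast}$) to derive (c) and (d) from (a) and (b). So the real work is (a) and (b).

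For (a), the key observation is that if $J\in\mathcal{L}(X)$, then $J^\ast\colon X^\ast\longrightarrow X^\ast$ is weak$^\ast$-to-weak$^\ast$ continuous, so for every $T\in\mathcal{L}_{w^\ast}(X^\ast,Y)$ the composition $T\circ J^\ast$ lies again in $\mathcal{L}_{w^\ast}(X^\ast,Y)$. I therefore define
$$\Phi_J\colon \mathcal{L}_{w^\ast}(X^\ast,Y)\longrightarrow \mathcal{L}_{w^\ast}(X^\ast,Y),\qquad \Phi_J(T)=T\circ J^\ast,$$
and similarly, for $S\in\mathcal{L}(Y)$,
$$\Psi_S\colon \mathcal{L}_{w^\ast}(X^\ast,Y)\longrightarrow \mathcal{L}_{w^\ast}(X^\ast,Y),\qquad \Psi_S(T)=S\circ T,$$
which is well defined because $S$ is weak-to-weak continuous. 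Clearly $\Phi_{\Id_X}=\Id$ and $\Psi_{\Id_Y}=\Id$, and the upper bounds $\|\Phi_J\|\leq\|J^\ast\|=\|J\|$ and $\|\Psi_S\|\leq\|S\|$ are immediate. For the reverse inequality $\|\Phi_J\|\geq\|J\|$, I mimic the argument in Proposition~\ref{num-idex-Ldex}: given $\eps>0$, pick $x_\eps\in S_X$ with $\|Jx_\eps\|>\|J\|-\eps$, a functional $x_\eps^\ast\in S_{X^\ast}$ with $x_\eps^\ast(Jx_\eps)>\|J\|-\eps$, fix any $y_0\in S_Y$, and consider the weak$^\ast$-continuous rank-one operator $T_\eps(x^\ast)=x^\ast(x_\eps)y_0$, which belongs to $\mathcal{L}_{w^\ast}(X^\ast,Y)$ and has norm one; then $\Phi_J(T_\eps)(x_\eps^\ast)=x_\eps^\ast(Jx_\eps)y_0$ gives $\|\Phi_J(T_\eps)\|>\|J\|-\eps$. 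The analogous bound $\|\Psi_S\|\geq\|S\|$ is produced in the same vein using an appropriate rank-one operator. Applying Lemma~\ref{lemma:isometric-inclusion-num-index} to $J\longmapsto\Phi_J$ and $S\longmapsto\Psi_S$ yields (a).

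For (b), exactly the same construction works because the witnessing operators $T_\eps$ are rank one, hence compact, and both pre-composition with $J^\ast$ and post-composition with $S$ preserve compactness; in particular $\Phi_J$ and $\Psi_S$ restrict to maps on $\mathcal{K}_{w^\ast}(X^\ast,Y)$, and the isometric-embedding/identity-preserving properties are inherited, so Lemma~\ref{lemma:isometric-inclusion-num-index} gives $n(\mathcal{K}_{w^\ast}(X^\ast,Y))\leq\min\{n(X),n(Y)\}$.

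Finally, (c) follows by applying (a) with $X$ replaced by $X^\ast$: this gives $n(\mathcal{L}_{w^\ast}(X^{\ast\ast},Y))\leq\min\{n(X^\ast),n(Y)\}$, and the left-hand side equals $n(\mathcal{W}(X,Y))$ via the canonical isometric isomorphism $T\longmapsto T^{\ast\ast}$ (which in particular carries $\Id_{\mathcal{W}(X,Y)}$ to $\Id_{\mathcal{L}_{w^\ast}(X^{\ast\ast},Y)}$, so the numerical indices coincide). The same substitution in (b) yields (d). The only point that needs any care — and the mildest possible ``obstacle'' — is checking that $J^\ast$ really provides the weak$^\ast$-to-weak$^\ast$ continuity needed so that the composition stays in $\mathcal{L}_{w^\ast}$ (rather than merely using a general operator on $X^\ast$); once one adopts the convention of precomposing with adjoints from $\mathcal{L}(X)$, the whole argument is a clean replay of Proposition~\ref{num-idex-Ldex}.
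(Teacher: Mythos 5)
Your proposal is correct and follows essentially the same route as the paper: pre-composition with $J^\ast$ and post-composition with $S\in\mathcal{L}(Y)$ give identity-preserving isometric embeddings into $\mathcal{L}\big(\mathcal{L}_{w^\ast}(X^\ast,Y)\big)$ (witnessed by weak$^\ast$-continuous rank-one operators, which also handles the restriction to $\mathcal{K}_{w^\ast}$), and then Lemma~\ref{lemma:isometric-inclusion-num-index} together with the identifications $\mathcal{W}(X,Y)\equiv\mathcal{L}_{w^\ast}(X^{\ast\ast},Y)$ and $\mathcal{K}(X,Y)\equiv\mathcal{K}_{w^\ast}(X^{\ast\ast},Y)$ yields (c) and (d). The only cosmetic difference is that for the $n(Y)$ bound in (b) the paper also offers the symmetry $\mathcal{K}_{w^\ast}(X^\ast,Y)\equiv\mathcal{K}_{w^\ast}(Y^\ast,X)$ as an alternative, which you do not need.
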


\begin{proof}
	To prove (a), for $J\in\mathcal{L}(X)$ we define the operator $\Psi_J\colon \mathcal{L}_{w^\ast}(X^\ast,Y)\longrightarrow \mathcal{L}_{w^\ast}(X^\ast,Y)$ given by $\Psi_J(T)=T\circ J^\ast$ for every $T\in\mathcal{L}_{w^\ast}(X^\ast,Y)$. Observe that it is well-defined because $J^\ast$ is weak$^\ast$-weak$^\ast$ continuous. Moreover, reasoning as in the proof of Proposition~\ref{num-idex-Ldex} we get $\|\Psi_J\|=\|J\|$. Therefore, the mapping $J\longmapsto \Psi_J$ is an isometric embedding from $\mathcal{L}(X)$ to $\mathcal{L}\big(\mathcal{L}_{w^\ast}(X^\ast,Y)\big)$ carrying $\Id_X$ to $\Id_{\mathcal{L}_{w^\ast}(X^\ast,Y)}$ so the inequality $n\big(\mathcal{L}_{w^\ast}(X^\ast,Y)\big)\leq n(X)$ follows from Lemma~\ref{lemma:isometric-inclusion-num-index}.
	
	The proof of $n\big(\mathcal{L}_{w^\ast}(X^\ast,Y)\big)\leq n(Y)$ can be done analogously. Indeed, for $J\in \mathcal{L}(Y)$ define the operator $\Gamma_J\colon \mathcal{L}_{w^\ast}(X^\ast,Y)\longrightarrow \mathcal{L}_{w^\ast}(X^\ast,Y)$ given by $\Gamma_J(T)=J\circ T$ for every $T\in\mathcal{L}_{w^\ast}(X^\ast,Y)$, which is well-defined because $J$ is weak-weak continuous. As before, it is easy to check that  $\|\Gamma_J\|=\|J\|$ so the mapping $J\longmapsto \Gamma_J$ is an isometric embedding from $\mathcal{L}(Y)$ to $\mathcal{L}\big(\mathcal{L}_{w^\ast}(X^\ast,Y)\big)$ carrying $\Id_Y$ to $\Id_{\mathcal{L}_{w^\ast}(X^\ast,Y)}$. Therefore, the inequality $n\big(\mathcal{L}_{w^\ast}(X^\ast,Y)\big)\leq n(Y)$ follows from Lemma~\ref{lemma:isometric-inclusion-num-index}.	
	
	Let us prove (b). To show that $n\big(\mathcal{K}_{w^\ast}(X^\ast,Y)\big)\leq n(X)$ it suffices to observe that ${\Psi_J}|_{\mathcal{K}_{w^\ast}(X^\ast,Y)}$, the restriction of $\Psi_J$ to $\mathcal{K}_{w^\ast}(X^\ast,Y)$, lies in $\mathcal{L}\big(\mathcal{K}_{w^\ast}(X^\ast,Y)\big)$ and satisfies $\left\|{\Psi_J}_{|\mathcal{K}_{w^\ast}(X^\ast,Y)}\right\|=\|J\|$. Therefore, the mapping $J\longmapsto {\Psi_J}_{|\mathcal{K}_{w^\ast}(X^\ast,Y)}$ is an isometric embedding from $\mathcal{L}(Y)$ to $\mathcal{L}\big(\mathcal{K}_{w^\ast}(X^\ast,Y)\big)$ carrying $\Id_X$ to $\Id_{\mathcal{K}_{w^\ast}(X^\ast,Y)}$ and Lemma~\ref{lemma:isometric-inclusion-num-index} gives the result.
	
	To prove $n\big(\mathcal{K}_{w^\ast}(X^\ast,Y)\big)\leq n(Y)$ one can proceed as in (a) or use what we just proved and the identification $\mathcal{K}_{w^\ast}(X^{\ast},Y)\equiv \mathcal{K}_{w^\ast}(Y^{\ast},X)$.
	
	(c) follows from (a) using the identification $\mathcal{W}(X,Y)\equiv\mathcal{L}_{w^\ast}(X^{\ast\ast},Y)$.
	
	(d) follows from (b) using the identification $\mathcal{K}(X,Y)\equiv\mathcal{K}_{w^\ast}(X^{\ast\ast},Y)$.
\end{proof}

As a consequence of \cite[Examples 3.3]{BoykoKadetsMartinWerner} and Theorem~\ref{thm-num-index-compact-wcompact} we have the following interesting examples.

\begin{Examples}\label{examples:nX=1,nK=0} \quad
	\begin{itemize}
		\item[(a)] {\slshape There exists a real Banach space $X$ with $n(X)=1$ and $n\big(\mathcal{K}(X,Y)\big)=n\big(\mathcal{W}(X,Y)\big)=0$ for every Banach space $Y$. In particular, $n(X)=1$ and $n(\mathcal{K}(X,X))=n(\mathcal{W}(X,X))=0$.} Indeed, the real space $X$ given in \cite[Examples 3.3.a]{BoykoKadetsMartinWerner} satisfies $n(X)=1$ and $n(X^\ast)=0$ so $n\big(\mathcal{K}(X,Y)\big)=n\big(\mathcal{W}(X,Y)\big)=0$ for every $Y$ by Theorem~\ref{thm-num-index-compact-wcompact}.
		\item[(b)] {\slshape There exists a complex Banach space $X$ with $n(X)=1$ and $n\big(\mathcal{K}(X,Y)\big)=n\big(\mathcal{W}(X,Y)\big)=1/\e$ for every Banach space $Y$. In particular, $n(X)=1$ and $n(\mathcal{K}(X,X))=n(\mathcal{W}(X,X))=1/\e$.} The complex space $X$ given in \cite[Examples 3.3.b]{BoykoKadetsMartinWerner} satisfies $n(X)=1$ and $n(X^\ast)=1/\e$, so it works by Theorem~\ref{thm-num-index-compact-wcompact} and the fact that every complex Banach space has numerical index less than or equal to $1/\e$.
	\end{itemize}
\end{Examples}

To obtain the analogue of Theorem~\ref{thm-num-index-compact-wcompact} for the numerical index of the space of approximable operators and also to get an analogous result for nuclear operators, we will use their representation as suitable tensor products in the next section.

We emphasize a consequence of the results for the case when the ideal spaces have numerical index one.

\begin{Coro}\label{corollary-L(X,Y)=1}
Let $X$, $Y$ be Banach spaces.
\begin{enumerate}
  \item If $n(\mathcal{L}(X,Y))=1$, then $n(X)=n(Y)=1$.
  \item If $n(\mathcal{K}(X,Y))=1$, then $n(X^\ast)=n(Y)=1$.
  \item If $n(\mathcal{W}(X,Y))=1$, then $n(X^\ast)=n(Y)=1$.
\end{enumerate}
\end{Coro}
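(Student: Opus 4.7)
The corollary is an immediate extraction from the inequalities already established in Proposition~\ref{num-idex-Ldex} and Theorem~\ref{thm-num-index-compact-wcompact}, combined with the trivial bound $n(Z)\leq 1$ valid for every Banach space $Z$. So my plan is essentially a one-line deduction for each of the three items, with no additional input.

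For (1), I would invoke Proposition~\ref{num-idex-Ldex} to get $n(\mathcal{L}(X,Y))\leq \min\{n(X),n(Y)\}$. Assuming $n(\mathcal{L}(X,Y))=1$, this forces $\min\{n(X),n(Y)\}\geq 1$; but both numerical indices lie in $[0,1]$, so $n(X)=n(Y)=1$. For (2), apply part~(d) of Theorem~\ref{thm-num-index-compact-wcompact}: $n(\mathcal{K}(X,Y))\leq \min\{n(X^\ast),n(Y)\}$; assuming the left-hand side equals $1$ squeezes both $n(X^\ast)$ and $n(Y)$ to $1$. Item (3) is identical with part~(c) of Theorem~\ref{thm-num-index-compact-wcompact} in place of~(d).

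There is no genuine obstacle here; the content of the corollary is packaged entirely inside the preceding proposition and theorem. The only thing worth noting is that in items (2) and~(3) the bound involves $n(X^\ast)$ rather than $n(X)$, so one should be careful not to mistakenly write $n(X)=1$ as the conclusion (which, since $n(X^\ast)\leq n(X)$, would follow but would be strictly weaker than what the theorem gives). The proof is therefore a short paragraph citing the three inequalities and observing that equality with~$1$ on the left forces equality with~$1$ on both numerical indices appearing in the minimum on the right.
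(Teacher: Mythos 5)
Your proof is correct and is exactly the deduction the paper intends: the corollary is stated without proof precisely because it follows immediately from Proposition~\ref{num-idex-Ldex} and Theorem~\ref{thm-num-index-compact-wcompact}(c),(d) together with the bound $n(Z)\leq 1$. Your remark about keeping $n(X^\ast)$ rather than weakening to $n(X)$ in items (2) and (3) is also the right point of care.
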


One may wonder whether the inequalities obtained for the numerical indices of operator ideals are equalities in general. The following example shows that this is not the case, even for finite-dimensional spaces.

\begin{Exam}
{\slshape There exist finite-dimensional Banach spaces $X$ and $Y$ with $n(X^\ast)=n(Y)=1$ and $n\big(\mathcal{L}(X,Y)\big)=n\big(\mathcal{F}(X,Y)\big)=n\big(\mathcal{K}(X,Y)\big)=n\big(\mathcal{W}(X,Y)\big)<1$.} Indeed, consider $X=\ell_\infty^4$ and $Y=\ell_1^4$, which have numerical index $1$, and observe that $n\big(\mathcal{L}(X,Y)\big)<1$ by \cite[Proposition 2.4, Lemma 3.2]{Lima}.
\end{Exam}

However there are cases in which the equality holds for the spaces of compact and weakly compact operators.

\begin{Rema}
{\slshape Let $K$ be a compact Hausdorff space, and let $X$ be a Banach space. Then, $$n\big(\mathcal{K}(X,C(K))\big)=n\big(\mathcal{W}(X,C(K))\big)=n(X^\ast).$$} Indeed, the space $\mathcal{K}(X,C(K))$ can be identified with $C(K,X^\ast)$ (see \cite[Theorem VI.7.1]{DunfordSchwartz}) and we have $n\big(C(K,X^\ast)\big)=n(X^\ast)$ by \cite[Theorem~5]{MartinPaya}. The equality $n\big(\mathcal{W}(X,C(K))\big)=n(X^\ast)$ holds by \cite[Corollary~3]{LopezMartinMeri}.
\end{Rema}

In the next result we give other conditions for which the equality is satisfied for the space of compact operators.

\begin{Prop}
Let $X$ be a Banach space such that $n(X^{\ast\ast\ast})=1$ and let $Z$ be an isometric predual of $\ell_1$. Then the space $\mathcal{K}(X,Z)^{\ast\ast}$ has numerical index one. Therefore, so do $\mathcal{K}(X,Z)^\ast$ and $\mathcal{K}(X,Z)$. In particular, $n\big(\mathcal{K}(c_0)\big)=n\big(\mathcal{K}(c_0)^\ast\big)=n\big(\mathcal{K}(c_0)^{\ast\ast}\big)=n\big(\mathcal{L}(\ell_\infty)\big)=1$ and $n\big(\mathcal{K}(\ell_1,c_0)^{\ast\ast}\big)=1$.
\end{Prop}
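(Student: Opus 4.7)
The plan is to identify $\mathcal{K}(X,Z)^{\ast\ast}$ isometrically with $\ell_\infty(X^{\ast\ast\ast})$ and then invoke the classical fact that the numerical index of the $\ell_\infty$-sum of a family of Banach spaces equals the infimum of their numerical indices, so it reduces to $n(X^{\ast\ast\ast})=1$.

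For the identification I would carry out four standard steps. First, since $Z$ is an isometric predual of $\ell_1$, it is a separable Lindenstrauss space and hence has the approximation property; therefore $\mathcal{K}(X,Z)\equiv X^{\ast}\hat{\otimes}_{\eps}Z$ isometrically. Second, the duality of the injective tensor product gives $(X^{\ast}\hat{\otimes}_{\eps}Z)^{\ast}\equiv\mathcal{I}(X^{\ast},Z^{\ast})=\mathcal{I}(X^{\ast},\ell_1)$, and since $\ell_1$ has both the Radon--Nikod\'ym property and the approximation property, this space of integral operators coincides isometrically with $\mathcal{N}(X^{\ast},\ell_1)\equiv X^{\ast\ast}\hat{\otimes}_{\pi}\ell_1$. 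Third, dualising once more, $\mathcal{K}(X,Z)^{\ast\ast}\equiv (X^{\ast\ast}\hat{\otimes}_{\pi}\ell_1)^{\ast}\equiv \mathcal{L}(X^{\ast\ast},\ell_\infty)$. Fourth, the map sending $T\colon X^{\ast\ast}\to\ell_\infty$ to its sequence of coordinates $(e_n^{\ast}\circ T)_{n\in\N}\subset X^{\ast\ast\ast}$ identifies $\mathcal{L}(X^{\ast\ast},\ell_\infty)$ isometrically with $\ell_\infty(X^{\ast\ast\ast})$.

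With these identifications in hand, $n(\mathcal{K}(X,Z)^{\ast\ast})=n\bigl(\ell_\infty(X^{\ast\ast\ast})\bigr)=n(X^{\ast\ast\ast})=1$. Applying the inequality $n(W^{\ast})\leq n(W)$ recalled in the introduction twice yields
$$1=n(\mathcal{K}(X,Z)^{\ast\ast})\leq n(\mathcal{K}(X,Z)^{\ast})\leq n(\mathcal{K}(X,Z))\leq 1,$$
so all three are equal to one. For the particular examples, $X=c_0$ gives $X^{\ast\ast\ast}\equiv\ell_\infty^{\ast}$, an abstract $L$-space and hence of numerical index one, and the chain above specialises to $\mathcal{K}(c_0)^{\ast\ast}\equiv\mathcal{L}(\ell_\infty,\ell_\infty)=\mathcal{L}(\ell_\infty)$; for $X=\ell_1$ with $Z=c_0$, the space $\ell_1^{\ast\ast\ast}\equiv\ell_\infty^{\ast\ast}$ is an abstract $M$-space with unit and thus likewise has numerical index one.

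The main obstacle I expect is verifying each link in the chain of isometric identifications---in particular the approximation property of the Lindenstrauss space $Z$ and the combined RNP$+$AP of $\ell_1$ that allow passing from integral to nuclear operators and then to the projective tensor product---after which the rest is a formal computation.
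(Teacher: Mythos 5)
Your proof is correct and follows essentially the same route as the paper: the identical chain of identifications $\mathcal{K}(X,Z)\equiv X^\ast\hat{\otimes}_\eps Z$, $\mathcal{K}(X,Z)^\ast\equiv X^{\ast\ast}\hat{\otimes}_\pi\ell_1$, $\mathcal{K}(X,Z)^{\ast\ast}\equiv\mathcal{L}(X^{\ast\ast},\ell_\infty)$, followed by the duality argument $n(W^\ast)\leq n(W)$ and the same treatment of the particular examples. The only (minor) difference is the last step: you read $\mathcal{L}(X^{\ast\ast},\ell_\infty)$ as the $\ell_\infty$-sum $\ell_\infty(X^{\ast\ast\ast})$ and invoke the formula for the numerical index of $\ell_\infty$-sums, whereas the paper identifies it with $C_{w^\ast}(\beta\N,X^{\ast\ast\ast})$ and quotes the inequality $n\big(C_{w^\ast}(K,X^\ast)\big)\geq n(X^\ast)$ --- two descriptions of the same space, and both steps are standard and valid.
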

\begin{proof}
	Since $Z$ has the approximation property, $\mathcal{K}(X,Z)\equiv X^\ast\hat{\otimes}_\eps Z$. Since $Z^\ast$ has the approximation property and the Radon-Nikod\'ym property, we can apply \cite[Theorem 16.6]{DefantFloret} to obtain $\mathcal{K}(X,Z)^\ast\equiv(X^\ast\hat{\otimes}_\eps Z)^\ast \equiv X^{\ast\ast}\hat{\otimes}_\pi\ell_1$. Therefore, $\mathcal{K}(X,Z)^{\ast\ast}\equiv(X^{\ast\ast}\hat{\otimes}_\pi\ell_1)^\ast\equiv\mathcal{L}(X^{\ast\ast},\ell_\infty)$. Now, by using the identification between $\ell_\infty$ and $C(\beta \N)$, where $\beta\N$ is the Stone--\v Cech compactification of $\N$, and the one between $C_{w^\ast}(\beta\N,X^{\ast\ast\ast})$ and $\mathcal{L}\big(X^{\ast\ast},C(\beta\N)\big)$ (see \cite[Theorem VI.7.1]{DunfordSchwartz}), we obtain that
	$$ n\big(\mathcal{L}(X^{\ast\ast},\ell_\infty)\big)=n\big(C_{w^\ast}(\beta\N,X^{\ast\ast\ast})\big)\geq n(X^{\ast\ast\ast})=1,$$
	where the inequality is given by \cite[Proposition 7]{LopezMartinMeri}. Then $n\big(\mathcal{K}(X,Z)^{\ast\ast}\big)=1$ as desired. The other statements follow straightforwardly.
\end{proof}

\section{Numerical index of tensor products}\label{section:tensorproduct-numindex}
Our goal here is to study the numerical index of projective and injective tensor products of Banach spaces. It is known that $n(X\hat{\otimes}_\eps Y)$ and $n(X\hat{\otimes}_\pi Y)$ cannot be computed as a function of $n(X)$ and $n(Y)$. Indeed, it is shown in \cite[Example~10]{MartinPaya} that there exist Banach spaces $X$ and $Y$ with $n(X)=n(Y)=1$ and such that $n(X\hat{\otimes}_\eps X)<1$, $n(Y\hat{\otimes}_\pi Y)<1$, and $n(X\hat{\otimes}_\pi X)=n(Y\hat{\otimes}_\eps Y)=1$. Therefore, our results will be inequalities, as in the previous section.

Our first result on tensor products follows immediately by Proposition~\ref{num-idex-Ldex} and the identifications $(X\hat{\otimes}_\pi Y)^\ast\equiv\mathcal{L}(X,Y^\ast)\equiv\mathcal{L}(Y,X^\ast)$ (see \cite[Proposition~3.2]{DefantFloret}, for instance).

\begin{Coro}\label{cor-num-index-dual-tensor}
	Let $X$, $Y$ be Banach spaces. Then $n\big((X\hat{\otimes}_\pi Y)^\ast\big)\leq \min\{n(X^\ast),n(Y^\ast)\}$.
\end{Coro}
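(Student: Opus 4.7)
The plan is to apply Proposition~\ref{num-idex-Ldex} directly, exploiting the two standard representations of the dual of the projective tensor product. Specifically, by the classical trace duality (see \cite[Proposition~3.2]{DefantFloret}), one has the isometric identifications
$$
(X\hat{\otimes}_\pi Y)^\ast \;\equiv\; \mathcal{L}(X,Y^\ast) \;\equiv\; \mathcal{L}(Y,X^\ast),
$$
so the numerical index of $(X\hat{\otimes}_\pi Y)^\ast$ coincides with each of $n(\mathcal{L}(X,Y^\ast))$ and $n(\mathcal{L}(Y,X^\ast))$.

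First I would apply Proposition~\ref{num-idex-Ldex} to the space $\mathcal{L}(X,Y^\ast)$, which yields
$$
n\big((X\hat{\otimes}_\pi Y)^\ast\big) \;=\; n\big(\mathcal{L}(X,Y^\ast)\big) \;\leq\; \min\{n(X),\,n(Y^\ast)\} \;\leq\; n(Y^\ast).
$$
Then I would apply Proposition~\ref{num-idex-Ldex} to $\mathcal{L}(Y,X^\ast)$ to obtain
$$
n\big((X\hat{\otimes}_\pi Y)^\ast\big) \;=\; n\big(\mathcal{L}(Y,X^\ast)\big) \;\leq\; \min\{n(Y),\,n(X^\ast)\} \;\leq\; n(X^\ast).
$$
Combining the two inequalities gives $n\big((X\hat{\otimes}_\pi Y)^\ast\big) \leq \min\{n(X^\ast),n(Y^\ast)\}$, as required.

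There is essentially no obstacle here, since the real content has been established: Proposition~\ref{num-idex-Ldex} provides the numerical index bound for every space of operators, and the identifications used are standard. The only point worth emphasizing is that one must invoke \emph{both} representations of $(X\hat{\otimes}_\pi Y)^\ast$ in order to recover the sharper bounds $n(X^\ast)$ and $n(Y^\ast)$ (rather than the weaker $n(X)$ and $n(Y)$ that a single application would yield), using the well-known inequality $n(Z^\ast)\leq n(Z)$ implicitly through the choice of which factor is placed in the domain.
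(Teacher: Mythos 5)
Your argument is correct and is exactly the paper's proof: the corollary is derived immediately from Proposition~\ref{num-idex-Ldex} together with both identifications $(X\hat{\otimes}_\pi Y)^\ast\equiv\mathcal{L}(X,Y^\ast)\equiv\mathcal{L}(Y,X^\ast)$, and you rightly note that both are needed to reach $n(X^\ast)$ and $n(Y^\ast)$ rather than $n(X)$ and $n(Y)$. (The closing aside about invoking $n(Z^\ast)\leq n(Z)$ is unnecessary --- the inequality plays no role in the argument --- but it does not affect correctness.)
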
	

Our main result in this section is the following pair of inequalities.

\begin{Theo}\label{num-index-tensor}
	Let $X$, $Y$ be Banach spaces. Then the following hold:
	\begin{itemize}
		\item[(a)] $n(X\hat{\otimes}_\pi Y)\leq \min\{n(X),n(Y)\}$,
		\item[(b)] $n(X\hat{\otimes}_\eps Y)\leq \min\{n(X),n(Y)\}$.
	\end{itemize}
\end{Theo}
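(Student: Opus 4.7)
The plan is to mimic exactly the strategy used in the proof of Proposition~\ref{num-idex-Ldex} and Theorem~\ref{thm-num-index-compact-wcompact}: build an isometric embedding of $\mathcal{L}(X)$ (resp.~$\mathcal{L}(Y)$) into $\mathcal{L}(X\hat{\otimes}_\pi Y)$ (resp.~$\mathcal{L}(X\hat{\otimes}_\eps Y)$) that carries the identity to the identity, and then invoke Lemma~\ref{lemma:isometric-inclusion-num-index}. The natural candidate, and this is the whole idea, is the tensorization with the identity.

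For part (a), define $\Phi\colon \mathcal{L}(X)\longrightarrow \mathcal{L}(X\hat{\otimes}_\pi Y)$ by $\Phi(J)=J\otimes_\pi \Id_Y$, and define $\Psi\colon \mathcal{L}(Y)\longrightarrow \mathcal{L}(X\hat{\otimes}_\pi Y)$ by $\Psi(S)=\Id_X\otimes_\pi S$. Linearity of both maps is clear (they agree with their linear counterparts on the elementary tensors, which span a dense subspace). The key property $\|J\otimes_\pi \Id_Y\|=\|J\|\,\|\Id_Y\|=\|J\|$ and analogously $\|\Id_X\otimes_\pi S\|=\|S\|$ is exactly the norm identity recalled in the introduction when the projective tensor product of operators was defined; in particular $\Phi$ and $\Psi$ are isometric embeddings. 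Finally, $\Phi(\Id_X)=\Id_X\otimes_\pi \Id_Y=\Id_{X\hat{\otimes}_\pi Y}$ because both sides coincide on elementary tensors and are bounded, and similarly $\Psi(\Id_Y)=\Id_{X\hat{\otimes}_\pi Y}$. Lemma~\ref{lemma:isometric-inclusion-num-index} then yields $n(X\hat{\otimes}_\pi Y)\leq n(X)$ and $n(X\hat{\otimes}_\pi Y)\leq n(Y)$, whence the minimum.

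For part (b) the argument is verbatim the same, only replacing $\otimes_\pi$ by $\otimes_\eps$: the maps $J\longmapsto J\otimes_\eps \Id_Y$ and $S\longmapsto \Id_X\otimes_\eps S$ are isometric embeddings of $\mathcal{L}(X)$ and $\mathcal{L}(Y)$ into $\mathcal{L}(X\hat{\otimes}_\eps Y)$ sending the respective identities to $\Id_{X\hat{\otimes}_\eps Y}$, again using the already noted identity $\|S\otimes_\eps T\|=\|S\|\,\|T\|$. Another application of Lemma~\ref{lemma:isometric-inclusion-num-index} gives (b).

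There is essentially no hard step here: the only thing one might worry about is that extending an algebraic tensor of operators to the completion could inflate the norm, but the two norm identities $\|S\otimes_\pi T\|=\|S\|\,\|T\|$ and $\|S\otimes_\eps T\|=\|S\|\,\|T\|$ (both already recorded in the introductory section of the paper) rule this out and make the embeddings strictly isometric. Everything else is a routine verification on elementary tensors followed by a density argument. The conceptual content of the theorem is therefore captured entirely by the observation that tensorization with the identity is an isometric, unital homomorphism from either factor's operator algebra into the operator algebra of the tensor product.
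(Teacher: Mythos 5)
Your argument is correct, but it is not the route the paper takes. You reduce everything to Lemma~\ref{lemma:isometric-inclusion-num-index} by observing that $J\longmapsto J\otimes_\pi \Id_Y$ (resp.\ $J\otimes_\eps \Id_Y$, $\Id_X\otimes_\pi S$, $\Id_X\otimes_\eps S$) is a linear isometry of $\mathcal{L}(X)$ into $\mathcal{L}(X\hat{\otimes}_\pi Y)$ carrying $\Id_X$ to $\Id_{X\hat{\otimes}_\pi Y}$; the hypotheses of that lemma are indeed met (the norm identities $\|S\otimes_\pi T\|=\|S\|\,\|T\|$ and $\|S\otimes_\eps T\|=\|S\|\,\|T\|$ give the isometry, and your embeddings are even multiplicative, so the image is a unital subalgebra, which is all that the underlying result from \cite{Cabrera-Rodriguez} could possibly require). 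The paper instead works with the same operator $T=S\otimes_\pi\Id_Y$ (resp.\ $S\otimes_\eps\Id_Y$) but estimates its numerical radius directly, proving $v(S\otimes\Id_Y)\leq v(S)$ by means of the $v_\delta$-characterization of Lemma~\ref{lemma-vdelta} applied to the generating sets $B_X\otimes B_Y$ of $B_{X\hat{\otimes}_\pi Y}$ and $B_{X^\ast}\otimes B_{Y^\ast}$ of $B_{(X\hat{\otimes}_\eps Y)^\ast}$. Your version is shorter and more conceptual, at the price of outsourcing the work to the abstract identification of spatial and algebra numerical ranges hidden in Lemma~\ref{lemma:isometric-inclusion-num-index}; the paper's computation is more self-contained, yields the explicit numerical-radius inequality $v(S\otimes\Id_Y)\leq v(S)$ as extra information, and showcases the $\tilde v_\delta$/$\bar v_\delta$ technique with distinguished generating subsets, which is the tool reused elsewhere in that line of work. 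The only cosmetic caveat in your write-up is the implicit assumption $Y\neq\{0\}$ (so that $\|\Id_Y\|=1$), which the paper makes as well.
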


We introduce some notation in order to present an interesting tool to calculate numerical radii which we will use in the proof of the theorem. Given a Banach space $X$, $\delta>0$, and $T\in \mathcal{L}(X)$, we write
$$
v_\delta(T):=\sup\bigl\{|x^*(Tx)|\colon x\in B_X,\,x^*\in B_{X^\ast},\,\re x^*(x)>1-\delta\bigr\}.
$$

\begin{Lemm}[\mbox{\rm \cite[Lemma~3.4]{KMMPQ}}]\label{lemma-vdelta}
Let $X$ be a Banach space. For $T\in \mathcal{L}(X)$, we have that
$$
v(T)=\inf_{\delta>0}v_\delta(T).
$$
Moreover, if $A\subset B_X$ satisfies that $\overline{\conv}(A)=B_X$ and $B\subset B_{X^\ast}$ satisfies that $\overline{\conv}^{w^\ast}(B)=B_{X^\ast}$, then the same equality holds if we replace $B_X$ and $B_{X^\ast}$ by $A$ and $B$ respectively in the definition of $v_\delta(T)$, that is,
$$
v(T)=\inf_{\delta>0} \sup\bigl\{|x^*(Tx)|\colon x\in A,\,x^*\in B,\,\re x^*(x)>1-\delta\bigr\}.
$$
\end{Lemm}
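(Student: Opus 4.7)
My plan is to prove the two assertions in turn. For the first identity $v(T) = \inf_{\delta > 0} v_\delta(T)$, the inequality $v(T) \leq v_\delta(T)$ is immediate: any pair $(x, x^\ast) \in S_X \times S_{X^\ast}$ with $x^\ast(x) = 1$ automatically satisfies $\re x^\ast(x) = 1 > 1 - \delta$, so it is admissible in the supremum defining $v_\delta(T)$. For the reverse inequality, I would invoke the Bishop--Phelps--Bollob\'as theorem: for every $\eps \in (0, 1)$ there exists $\delta = \delta(\eps) > 0$ such that whenever $(x, x^\ast) \in B_X \times B_{X^\ast}$ satisfies $\re x^\ast(x) > 1 - \delta$, one can find $(y, y^\ast) \in S_X \times S_{X^\ast}$ with $y^\ast(y) = 1$, $\|x - y\| < \eps$, and $\|x^\ast - y^\ast\| < \eps$. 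A standard two-term triangle estimate then gives $|x^\ast(Tx) - y^\ast(Ty)| \leq 2\eps\|T\|$, so $|x^\ast(Tx)| \leq v(T) + 2\eps\|T\|$; taking the supremum and letting $\eps \to 0$ yields $\inf_{\delta > 0} v_\delta(T) \leq v(T)$.

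For the moreover part, denote by $\tilde v_\delta(T)$ the supremum taken over $x \in A$, $x^\ast \in B$ with $\re x^\ast(x) > 1 - \delta$. Since $A \subset B_X$ and $B \subset B_{X^\ast}$, trivially $\tilde v_\delta(T) \leq v_\delta(T)$, so the first part already gives $\inf_\delta \tilde v_\delta(T) \leq v(T)$. For the reverse I would fix $\eps, \delta > 0$, pick $(x_0, x_0^\ast) \in S_X \times S_{X^\ast}$ with $x_0^\ast(x_0) = 1$ and $|x_0^\ast(Tx_0)| > v(T) - \eps$, and then extract a good pair $(a, b) \in A \times B$ in two stages.

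In the first stage I use $w^\ast$-density of $\conv(B)$ in $B_{X^\ast}$: for small $\eta > 0$ I pick a convex combination $\sum_j \mu_j b_j$ with $b_j \in B$ satisfying $|\sum_j \mu_j b_j(x_0) - 1| < \eta$ and $|\sum_j \mu_j b_j(Tx_0) - x_0^\ast(Tx_0)| < \eta$. The bound $\sum_j \mu_j (1 - \re b_j(x_0)) < \eta$ combined with a pigeonhole on $J := \{j : \re b_j(x_0) > 1 - \sqrt\eta\}$ gives $\sum_{j \notin J} \mu_j \leq \sqrt\eta$, and splitting $\sum_j \mu_j b_j(Tx_0)$ along $J$ and its complement produces a single $b := b_{j^\star} \in B$ with $j^\star \in J$ satisfying both $\re b(x_0) > 1 - \sqrt\eta$ and $|b(Tx_0)| \geq v(T) - \eps - \eta - \sqrt\eta\,\|T\|$. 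In the second stage, since $x_0 \in \overline{\conv}(A)$, for small $\zeta > 0$ I pick $\sum_i \lambda_i a_i$ with $a_i \in A$ and $\|x_0 - \sum_i \lambda_i a_i\| < \zeta$; the same pigeonhole applied to $I := \{i : \re b(a_i) > 1 - \delta\}$, exploiting $\sum_i \lambda_i(1 - \re b(a_i)) < \sqrt\eta + \zeta$, produces $a := a_{i^\star} \in A$ with $\re b(a) > 1 - \delta$ and $|b(Ta)| \geq v(T) - \eps - E(\eta, \zeta, \|T\|, \delta)$, where $E \to 0$ as $\eta, \zeta \to 0$ with $\delta$ fixed. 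Choosing $\eta, \zeta$ sufficiently small gives $\tilde v_\delta(T) \geq v(T) - 2\eps$; since $\eps$ is arbitrary, $\tilde v_\delta(T) \geq v(T)$.

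The main obstacle is the book-keeping of the two-stage pigeonhole in Part 2: each averaging step introduces ``bad'' indices that must be excised without destroying the near-optimality of $|b(Ta)|$. After Stage 1 one only controls $\re b(x_0) > 1 - \sqrt\eta$, but must ultimately ensure $\re b(a) > 1 - \delta$, which forces $\eta$ and $\zeta$ to be small relative to $\delta^2$. The order also matters: one must first $w^\ast$-approximate $x_0^\ast$ (which requires the test vectors $x_0$ and $Tx_0$ to be fixed in advance) and only then norm-approximate $x_0$, so that the norm estimate transfers uniformly to the chosen functional $b$.
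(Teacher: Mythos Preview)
The paper does not prove this lemma at all: it is quoted verbatim from \cite[Lemma~3.4]{KMMPQ} and used as a black box in the proof of Theorem~\ref{num-index-tensor}. So there is no ``paper's own proof'' to compare against.

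That said, your argument is correct. Part~1 via the Bishop--Phelps--Bollob\'as theorem is the standard route and the $2\eps\|T\|$ perturbation estimate is exactly what is needed. In Part~2 the two-stage convex-approximation-plus-Markov-pigeonhole scheme works: after Stage~1 you have $b\in B$ with $\re b(x_0)>1-\sqrt\eta$ and $|b(Tx_0)|$ close to $v(T)$; in Stage~2 the norm approximation of $x_0$ by $\sum_i\lambda_i a_i$ transfers both $\re b(\cdot)$ and $b(T\,\cdot)$ to the convex combination with error $O(\zeta)$, and the bound $\sum_{i\notin I}\lambda_i\leq(\sqrt\eta+\zeta)/\delta$ lets you excise the bad indices. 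Your remark that $\eta,\zeta$ must be chosen small relative to $\delta$ (indeed, $\sqrt\eta+\zeta\ll\delta$) is the right bookkeeping constraint, and the order of stages---functional first, point second---is essential for exactly the reason you state: the $w^\ast$-approximation only controls finitely many test vectors, so $x_0$ and $Tx_0$ must be fixed before $b$ is chosen.

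One minor point: you should note explicitly that the choice $(\sqrt\eta+\zeta)/\delta<1$ guarantees $I\neq\emptyset$, so that the maximum over $i\in I$ is genuinely attained; otherwise the pigeonhole step is vacuous.
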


\begin{proof}[Proof of the Theorem~\ref{num-index-tensor}]
	(a). We prove first $n(X\hat{\otimes}_\pi Y)\leq n(X)$. Given $S\in \mathcal{L}(X)$ with $\|S\|=1$, we consider the operator $T=S\otimes_\pi \Id_Y\in \mathcal{L}(X\hat{\otimes}_\pi Y)$ which satisfies that $\|T\|=\|S\| \|\Id_Y\|=1$. Since $B_{X\hat{\otimes}_\pi Y}=\overline{\conv}\left(B_X\otimes B_Y\right)$ and $(X\hat{\otimes}_\pi Y)^\ast=\mathcal{L}(Y,X^\ast)$, by Lemma~\ref{lemma-vdelta} we can estimate the numerical radius of $T$ as
	$$
	v(T)=\inf_{\delta>0} \tilde{v}_\delta(T),
	$$
	where for $\delta>0$,
	$$
	\tilde{v}_\delta(T):= \sup \left\{\left|\langle\Phi,Tz\rangle\right|\colon z\in B_X\otimes B_Y,\,\Phi\in B_{\mathcal{L}(Y,X^\ast)},\, \re\langle\Phi,z\rangle>1-\delta \right\}.
	$$
	Fixed $\delta>0$, we claim that $\tilde{v}_\delta(T)\leq v_\delta(S)$. Indeed, fix $z=x\otimes y\in B_X\otimes B_Y$ and $\Phi\in B_{\mathcal{L}(Y,X^\ast)}$ such that $\re\langle\Phi,z\rangle=\re\langle \Phi(y),x\rangle>1-\delta$, define $x^\ast=\Phi(y)\in B_{X^\ast}$, and observe that $\re x^\ast(x)=\re\langle\Phi,z\rangle>1-\delta$. Then,
	$$
	\left|\langle\Phi,Tz\rangle\right|=\left|\langle\Phi,Sx\otimes y\rangle\right|=\left|\langle\Phi(y),Sx\rangle\right|=\left|x^\ast(Sx)\right|\leq v_\delta(S)
	$$
	which gives $\tilde{v}_\delta(T)\leq v_\delta(S)$. Then we get that $v(T)\leq v(S)$ and, as $\|T\|=\|S\|=1$, we deduce that $n(X\hat{\otimes}_\pi Y)\leq n(X)$. By repeating this process using this time the identification $(X\hat{\otimes}_\pi Y)^\ast\equiv\mathcal{L}(X,Y^\ast)$, we also obtain that $n(X\hat{\otimes}_\pi Y)\leq n(Y)$.
	
	(b). We prove $n(X\hat{\otimes}_\eps Y)\leq n(X)$. Given $S\in \mathcal{L}(X)$ with $\|S\|=1$, we consider $T=S\otimes_\eps \Id_Y\in \mathcal{L}(X\hat{\otimes}_\eps Y)$ which satisfies that $\|T\|=\|S\| \|\Id_Y\|=1$. Since $B_{(X\hat{\otimes}_\eps Y)^\ast}= \overline{\conv}^{w^\ast}(B_{X^\ast}\otimes B_{Y^\ast})$ and
	$$
	B_{X\hat{\otimes}_\eps Y}=\overline{\{z\in X\otimes Y\colon \|z\|_\eps\leq 1\}},
	$$
	we use the following to estimate the numerical radius of $T$ (again by by Lemma~\ref{lemma-vdelta}):
	$$
	v(T)=\inf_{\delta>0} \bar{v}_\delta(T)
	$$
	where
	$$
	\bar{v}_\delta:=\sup \left\{|z^\ast(Tz)| \colon z^\ast\in B_{X^\ast}\otimes B_{Y^\ast},\, z\in X\otimes Y \text{ with } \|z\|_\eps\leq 1,\, \re z^\ast(z)>1-\delta \right\}.
	$$
	Given $\delta>0$, we claim that $\bar{v}_\delta(T)\leq v_\delta(S)$. Indeed, fixed $z=\sum_{i=1}^{n} x_i\otimes y_i \in X\otimes Y$ with $\|z\|_\eps\leq 1$ and $z^\ast=x^\ast_0\otimes y^\ast_0\in B_{X^\ast}\otimes B_{Y^\ast}$ with $\re z^\ast(z)=\re \sum_{i=1}^{n} x^\ast_0(x_i)y^\ast_0(y_i)>1-\delta$, we consider $x=\sum_{i=1}^{n} y^\ast_0(y_i)x_i \in B_X$ which satisfies
	$$
	\|x\|=\left\|\sum_{i=1}^{n} y^\ast_0(y_i)x_i\right\|\leq \sup \left\{ \left|\sum_{i=1}^{n} y^\ast_0(y_i)x^*(x_i)\right| \colon x^\ast\in B_{X^\ast} \right\}\leq\|z\|_\eps
	$$
	and $\re x^\ast_0(x)=\re \sum_{i=1}^{n} x^\ast_0(x_i)y^\ast_0(y_i)>1-\delta$. Hence we can write
	$$
	|z^\ast(Tz)|=\left|\langle x^\ast_0\otimes y^\ast_0, \sum_{i=1}^n Sx_i\otimes y_i\rangle \right|=\left|\sum_{i=1}^{n} x^\ast_0(Sx_i)y^\ast_0(y_i)\right|=|x^\ast_0(Sx)|\leq v_\delta (S).
	$$
	Then, we deduce that $\bar{v}_\delta(T)\leq v_\delta(S)$ as claimed. From this, we get that $v(S)\geq v(T)\geq n(X\hat{\otimes}_\eps Y)$. Therefore $n(X\hat{\otimes}_\eps Y)\leq n(X)$. The inequality $n(X\hat{\otimes}_\eps Y)\leq n(Y)$ follows by symmetry. 	
\end{proof}

Let us observe that it is not possible to improve Theorem~\ref{num-index-tensor} to get the numerical index of the dual of the factors in the right-hand side.

\begin{Exam}
{\slshape Let $X_1=C[0,1]$, $X_2=L_1[0,1]$ and let $Y$ be a Banach space with $n(Y)=1$ and $n(Y^\ast)<1$ (use \cite[Examples~3.3]{BoykoKadetsMartinWerner} for instance). Then, $X_1\hat{\otimes}_\eps Y\equiv C([0,1],Y)$, so $n(X_1\hat{\otimes}_\eps Y)=1$ by \cite[Theorem~5]{MartinPaya}, while $n(Y^\ast)<1$. On the other hand, $X_2\hat{\otimes}_\pi Y \equiv L_1([0,1],Y)$, so $n(X_1\hat{\otimes}_\pi Y)=1$ by \cite[Theorem~8]{MartinPaya}, while $n(Y^\ast)<1$.}
\end{Exam}

Nevertheless, the next inequality for the numerical index of the dual of an injective tensor product holds.

\begin{Coro}
	Let $X$, $Y$ be Banach spaces. If $X^\ast$ or $Y^\ast$ has the approximation property and $X$ or $Y$ has the Radon-Nikod\'ym property, then
	$$
	n\big((X\hat{\otimes}_\eps Y)^\ast\big)\leq \min\left\{n(X^\ast),n(Y^\ast)\right\}.
	$$
\end{Coro}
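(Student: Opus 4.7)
The plan is to reduce the corollary to Theorem~\ref{num-index-tensor}(a) by means of a duality that identifies $(X\hat{\otimes}_\eps Y)^\ast$ with a projective tensor product of the duals. Under the stated hypotheses (approximation property for $X^\ast$ or $Y^\ast$ plus Radon--Nikod\'ym property for $X$ or $Y$), one has the classical representation
$$
(X\hat{\otimes}_\eps Y)^\ast \equiv X^\ast \hat{\otimes}_\pi Y^\ast,
$$
which is precisely the content of \cite[Theorem~16.6]{DefantFloret} invoked earlier in the paper for the analogous identification of $\mathcal{K}(X,Z)^\ast$ as $X^{\ast\ast}\hat{\otimes}_\pi \ell_1$. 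So the first step is simply to quote this duality, noting that it applies in both of the symmetric hypothesis combinations (one side has the approximation property, the other has the Radon--Nikod\'ym property).

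Once this isometric identification is in place, the second step is immediate: apply Theorem~\ref{num-index-tensor}(a) to the Banach spaces $X^\ast$ and $Y^\ast$, obtaining
$$
n\bigl(X^\ast \hat{\otimes}_\pi Y^\ast\bigr)\leq \min\bigl\{n(X^\ast),n(Y^\ast)\bigr\}.
$$
Since the numerical index is an isometric invariant of the underlying Banach space (it is defined purely from the operator norm and the numerical radius, which is itself determined by the norm and the dual pairing), the identification above transfers this bound verbatim to $(X\hat{\otimes}_\eps Y)^\ast$, finishing the proof.

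There is really no substantial obstacle here; the work has already been done in Theorem~\ref{num-index-tensor} and the only ingredient to add is the representation theorem. The hypotheses in the statement are exactly those needed to guarantee that the canonical map $X^\ast\hat{\otimes}_\pi Y^\ast \longrightarrow (X\hat{\otimes}_\eps Y)^\ast$ is a surjective isometry, so the only thing to be careful about is to cite \cite[Theorem~16.6]{DefantFloret} in the form that covers both symmetric cases (AP on one factor, RNP on the other). With this in hand, the corollary is a one-line consequence of Theorem~\ref{num-index-tensor}(a).
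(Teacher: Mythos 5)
Your argument is correct and is exactly the paper's own proof: both invoke \cite[Theorem~16.6]{DefantFloret} to identify $(X\hat{\otimes}_\eps Y)^\ast$ with $X^\ast\hat{\otimes}_\pi Y^\ast$ under the stated hypotheses and then apply Theorem~\ref{num-index-tensor}(a). Nothing is missing.
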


\begin{proof}
	The result is an immediate consequence of Theorem~\ref{num-index-tensor} as	
the identification $(X\hat{\otimes}_\eps Y)^\ast\equiv X^\ast\hat{\otimes}_\pi Y^\ast$ holds under the hypotheses (see \cite[Theorem 16.6]{DefantFloret}).
\end{proof}	

The next consequence is an inequality for the numerical index of spaces of approximable operators similar to the one given in Theorem~\ref{thm-num-index-compact-wcompact} for compact and weakly compact operators.

\begin{Coro}
Let $X$, $Y$ be Banach spaces. Then
$$
n\big(\mathcal{A}(X,Y)\big)\leq \min\left\{n(X^\ast),n(Y)\right\}.
$$
\end{Coro}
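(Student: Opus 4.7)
The plan is to reduce the statement to Theorem~\ref{num-index-tensor}(b) via the classical identification of the space of approximable operators with an injective tensor product. Specifically, I would use the isometric isomorphism
$$
\mathcal{A}(X,Y) \equiv X^\ast \hat{\otimes}_\eps Y,
$$
which is implemented by the map sending an elementary tensor $x^\ast \otimes y \in X^\ast \otimes Y$ to the rank-one operator $u \longmapsto x^\ast(u)\,y$ in $\mathcal{F}(X,Y)$. This map is a linear bijection from $X^\ast \otimes Y$ onto the space of finite-rank operators $\mathcal{F}(X,Y)$, and it is an isometry when $X^\ast \otimes Y$ carries the injective norm $\|\cdot\|_\eps$ and $\mathcal{F}(X,Y)$ carries the operator norm; passing to completions on both sides yields the stated identification. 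This is a well-known fact that can be found, for example, in \cite{Ryan} or \cite{DefantFloret}.

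Once this identification is in place, the corollary is immediate: applying Theorem~\ref{num-index-tensor}(b) with $X$ replaced by $X^\ast$ gives
$$
n\bigl(\mathcal{A}(X,Y)\bigr) = n\bigl(X^\ast \hat{\otimes}_\eps Y\bigr) \leq \min\bigl\{n(X^\ast),\, n(Y)\bigr\},
$$
which is exactly the claim.

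There is no real obstacle here, since everything reduces to invoking the previously established Theorem~\ref{num-index-tensor}(b) together with a standard representation theorem. The only point worth double-checking is that the isometric identification is set up on the correct side (the approximable operators correspond to the injective tensor product of the \emph{dual} of the domain with the range, which is why the inequality involves $n(X^\ast)$ rather than $n(X)$, matching the pattern of Theorem~\ref{thm-num-index-compact-wcompact} for compact and weakly compact operators).
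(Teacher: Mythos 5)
Your proposal is correct and follows exactly the paper's own argument: the paper likewise invokes the isometric identification $\mathcal{A}(X,Y)\equiv X^\ast\hat{\otimes}_\eps Y$ (citing \cite[Examples~4.2]{DefantFloret}) and then applies Theorem~\ref{num-index-tensor}(b). Your additional explanation of how the identification is implemented and why the dual of the domain appears is accurate but not needed beyond the citation.
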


\begin{proof}
	It follows from Theorem \ref{num-index-tensor}.b as $\mathcal{A}(X,Y)\equiv X^\ast\hat{\otimes}_\eps Y$ (see \cite[Examples~4.2]{DefantFloret}).
\end{proof}

For the space of nuclear operators we may also give some interesting inequalities.

\begin{Coro}
Let $X$, $Y$ be Banach spaces. If either $X^\ast$ or $Y$ has the approximation property, then the following hold:
\begin{itemize}
	\item[(a)] $n\big(\mathcal{N}(X,Y)\big)\leq\min\{n(X^\ast),n(Y)\}$.
	\item[(b)] $n\big(\mathcal{N}(X,Y)^\ast\big)\leq\min\{n(X^{\ast\ast}),n(Y^\ast)\}$.
\end{itemize}
\end{Coro}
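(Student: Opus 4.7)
The plan is to reduce both inequalities to Theorem~\ref{num-index-tensor} and Corollary~\ref{cor-num-index-dual-tensor} by using the standard identification of nuclear operators with a projective tensor product under the approximation property. Specifically, when $X^\ast$ or $Y$ has the approximation property, the canonical map $X^\ast\otimes Y\longrightarrow \mathcal{N}(X,Y)$ extends to an isometric isomorphism $X^\ast\hat\otimes_\pi Y\equiv \mathcal{N}(X,Y)$ (this is recorded, e.g., in \cite[Corollary~4.8 and Proposition~5.3]{DefantFloret}/\cite{Ryan}). Once this identification is in hand, the rest is immediate.

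For part (a), I would simply substitute the identification $\mathcal{N}(X,Y)\equiv X^\ast\hat\otimes_\pi Y$ into Theorem~\ref{num-index-tensor}(a) applied to the pair of spaces $X^\ast$ and $Y$, obtaining
\[
n\big(\mathcal{N}(X,Y)\big)=n\big(X^\ast\hat\otimes_\pi Y\big)\leq \min\{n(X^\ast),n(Y)\}.
\]
For part (b), I would take duals in the same identification to get $\mathcal{N}(X,Y)^\ast\equiv (X^\ast\hat\otimes_\pi Y)^\ast$, and then apply Corollary~\ref{cor-num-index-dual-tensor} to the factors $X^\ast$ and $Y$, yielding
\[
n\big(\mathcal{N}(X,Y)^\ast\big)=n\big((X^\ast\hat\otimes_\pi Y)^\ast\big)\leq \min\{n(X^{\ast\ast}),n(Y^\ast)\}.
\]

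There is essentially no obstacle beyond citing the correct representation theorem: the whole content of the corollary is that nuclear operators admit a projective tensor-product model whenever one of the natural approximation-property hypotheses holds, and both desired inequalities are then transcriptions of results already proven in this section. The only minor point worth flagging in the write-up is that one needs the approximation property on $X^\ast$ (not just on $X$) or on $Y$ in order for the canonical map from $X^\ast\hat\otimes_\pi Y$ to $\mathcal{N}(X,Y)$ to be injective and hence isometric, and this is exactly the hypothesis placed in the statement.
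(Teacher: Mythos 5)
Your proposal is correct and follows exactly the same route as the paper: identify $\mathcal{N}(X,Y)$ with $X^\ast\hat{\otimes}_\pi Y$ under the approximation property hypothesis (the paper cites \cite[Corollary~5.7.1]{DefantFloret}), then apply Theorem~\ref{num-index-tensor}(a) for part (a) and Corollary~\ref{cor-num-index-dual-tensor} to the dual identification for part (b). No differences worth noting.
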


\begin{proof}
(a).	Since $X^\ast$ or $Y$ has the approximation property, we have that $\mathcal{N}(X,Y)\equiv X^\ast\hat{\otimes}_\pi Y$ (see \cite[Corollary~5.7.1]{DefantFloret}) and the result follows from Theorem \ref{num-index-tensor}.a.

(b). Corollary~\ref{cor-num-index-dual-tensor} gives the result using the equality $\mathcal{N}(X,Y)^\ast= \big(X^\ast\hat{\otimes}_\pi Y\big)^\ast$.
\end{proof}

Finally, we may give a result analogous to Corollary~\ref{corollary-L(X,Y)=1} for the results of this section.

\begin{Coro}\label{corollary-indextensorproduct=1}
Let $X$, $Y$ be Banach spaces.
\begin{enumerate}
  \item If $n\big((X\hat{\otimes}_\pi Y)^\ast\big)=1$, then $n(X^\ast)=n(Y^\ast)=1$.
  \item If $n(X \hat{\otimes}_\eps Y)=1$, then $n(X)=n(Y)=1$.
  \item If $n(X \hat{\otimes}_\pi Y)=1$, then $n(X)=n(Y)=1$.
  \item If $n\big(\mathcal{A}(X,Y)\big)=1$, then $n(X^\ast)=n(Y)=1$.
  \item If $n\big((X\hat{\otimes}_\eps Y)^\ast\big)=1$, then $n(X^\ast)=n(Y^\ast)=1$.
  \item If $n\big(\mathcal{N}(X,Y)\big)=1$, then $n(X^\ast)=n(Y)=1$.
  \item If $n\big(\mathcal{N}(X,Y)^\ast\big)=1$, then $n(X^{\ast\ast})=n(Y^\ast)=1$.
\end{enumerate}
\end{Coro}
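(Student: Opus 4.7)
The plan is to read off each of the seven items directly from the corresponding inequality established earlier in this section, using the elementary fact that $n(Z)\in[0,1]$ for every Banach space $Z$: once a bound of the form $1\leq \min\{n(A),n(B)\}$ is available, both $n(A)$ and $n(B)$ are forced to equal $1$. So the whole statement reduces to a one-line verification per item, once the correct earlier result is invoked.

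Concretely, item~(1) is immediate from Corollary~\ref{cor-num-index-dual-tensor}; items~(2) and~(3) from Theorem~\ref{num-index-tensor}(b) and~(a), respectively; item~(4) from the corollary giving $n(\mathcal{A}(X,Y))\leq \min\{n(X^\ast),n(Y)\}$; and items~(6) and~(7) from the nuclear-operator corollary, where the implicit approximation-property hypothesis on $X^\ast$ or $Y$ gives the identification $\mathcal{N}(X,Y)\equiv X^\ast\hat{\otimes}_\pi Y$, to which Theorem~\ref{num-index-tensor}(a) and Corollary~\ref{cor-num-index-dual-tensor} apply. In every case the argument is the same: assume the index on the left equals $1$; cite the earlier inequality to obtain $1\leq \min\{\cdot,\cdot\}$; conclude that both entries of the $\min$ must be $1$.

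The one item deserving a separate remark is~(5), since the earlier inequality for $n\big((X\hat{\otimes}_\eps Y)^\ast\big)$ was proved under approximation-property and Radon-Nikod\'{y}m hypotheses which do not appear in~(5); this is the main technical obstacle. I would handle it by mimicking the proofs of Proposition~\ref{num-idex-Ldex} and Theorem~\ref{thm-num-index-compact-wcompact}(a): using the identification $(X\hat{\otimes}_\eps Y)^\ast\equiv I(Y,X^\ast)$ with integral operators, for each $J\in\mathcal{L}(X^\ast)$ set $\Phi_J(T):=J\circ T$; the ideal property of integral operators combined with a rank-one test operator (whose integral, nuclear and operator norms all coincide) yields $\|\Phi_J\|=\|J\|$, so $J\mapsto \Phi_J$ is an isometric embedding of $\mathcal{L}(X^\ast)$ into $\mathcal{L}\big((X\hat{\otimes}_\eps Y)^\ast\big)$ carrying $\Id_{X^\ast}$ to the identity. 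Lemma~\ref{lemma:isometric-inclusion-num-index} then gives $n\big((X\hat{\otimes}_\eps Y)^\ast\big)\leq n(X^\ast)$, and a symmetric argument (post-composition by operators on $Y^\ast$, using the $I(X,Y^\ast)$ picture) gives the bound by $n(Y^\ast)$, from which~(5) follows exactly as the other items.
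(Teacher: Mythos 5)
Your reduction of items (1)--(4), (6) and (7) to the inequalities already established in the section, combined with the trivial bound $n(\cdot)\leq 1$, is exactly what the paper does: the corollary is stated there without proof as an immediate consequence of Corollary~\ref{cor-num-index-dual-tensor}, Theorem~\ref{num-index-tensor}, and the corollaries on $\mathcal{A}(X,Y)$ and $\mathcal{N}(X,Y)$, and you are right to flag that (6) and (7) silently inherit the approximation-property hypothesis on $X^\ast$ or $Y$. Where you genuinely depart from the paper is item (5). The paper's only stated bound for $n\big((X\hat{\otimes}_\eps Y)^\ast\big)$ goes through $(X\hat{\otimes}_\eps Y)^\ast\equiv X^\ast\hat{\otimes}_\pi Y^\ast$, which requires the approximation property of $X^\ast$ or $Y^\ast$ together with the Radon--Nikod\'ym property of $X$ or $Y$; read literally, item (5) therefore carries those hypotheses implicitly in the paper. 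Your alternative via the unconditional isometric identification of $(X\hat{\otimes}_\eps Y)^\ast$ with the integral operators from $Y$ into $X^\ast$ is sound: the ideal property of the integral norm gives $\|J\circ T\|_{I}\leq\|J\|\,\|T\|_{I}$, rank-one test operators (on which the integral and operator norms coincide) show that $J\longmapsto \Phi_J$ is isometric and carries $\Id_{X^\ast}$ to the identity, and Lemma~\ref{lemma:isometric-inclusion-num-index} then yields $n\big((X\hat{\otimes}_\eps Y)^\ast\big)\leq\min\{n(X^\ast),n(Y^\ast)\}$ with no hypotheses at all, exactly as in the proof of Proposition~\ref{num-idex-Ldex}. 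This buys an unconditional item (5) --- indeed an unconditional strengthening of the corollary preceding it in the paper --- at the modest cost of importing the basic theory of the integral norm, which the paper otherwise avoids.
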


\section{Daugavet property and tensor products}\label{section:Daugavet}
In this section we study the relationship between the Daugavet property and tensor products. A sight to Corollary \ref{corollary-indextensorproduct=1} may lead to think that an analogous result can be true for the Daugavet property, that is, if $X\hat{\otimes}_\pi Y$ or $X\hat{\otimes}_\eps Y$ has the Daugavet property, do $X$ and $Y$ inherit this property? The answer is negative in general since, for instance, $L_1([0,1],Y)=L_1[0,1]\hat{\otimes}_\pi Y$ and $C([0,1],Y)=C[0,1]\hat{\otimes}_\eps Y$ have the Daugavet property for every Banach space $Y$, regardless that $Y$ has the Daugavet property or not. Our goal here is to show some cases in which the Daugavet property of a tensor product passes to one of the factors. To state our results, we need the definition and basic properties of the concept of slicely countably determined sets introduced in \cite{SCD}, where we refer for background. Let $A$ be a bounded subset of a Banach space $X$. A countable family $\{V_n\colon n\in\N\}$ of subsets of $A$ is called \emph{determining} for $A$ if the inclusion $A\subseteq\overline{\conv}(B)$ holds for every subset $B\subseteq A$ intersecting all the sets $V_n$. Recall that a \emph{slice} of $A$ is a nonempty intersection of $A$ with an open half space, and for $x^*\in X^\ast$ and $\delta>0$, we write
$$
\operatorname{Slice}(A,x^*,\delta):=\{x\in A\colon \re x^*(x)>\sup \re x^*(A)-\delta\}.
$$
The set $A$ is said to be \emph{slicely countably determined} (\emph{SCD} in short) if there exists a countable family of slices which is determining for $A$. Examples of SCD sets are the Radon-Nikod\'{y}m set and those sets not containing basic sequences equivalent to the basis of $\ell_1$ \cite{SCD}. A bounded linear operator $T\colon X\longrightarrow Y$ between two Banach spaces $X$ and $Y$ is an \emph{SCD-operator} if $T(B_X)$ is an SCD set, so examples of SCD-operators are the strong Radon-Nikod\'{y}m ones and those not fixing copies of $\ell_1$ \cite{SCD}. Finally, let us comment that a set $A$ is SCD if and only if $\overline{\conv}(A)$ is SCD \cite[Proposition~7.17]{SpearsBook}. Consequently, if $A$ is SCD then so is every set $C$ satisfying $A\subset \overline{C} \subset \overline{\conv}(A)$.

The main result of this section is the following one which deals with projective tensor products.

\begin{Theo} \label{theorem:Daugavet-tensorproduct}
Let $X$, $Y$ be Banach spaces. Suppose that $B_Y$ is an SCD set and $X\hat{\otimes}_\pi Y$ has the Daugavet property. Then, $X$ has the Daugavet property.
\end{Theo}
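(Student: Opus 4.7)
The plan is to reduce the statement to the fundamental theorem from SCD theory proved in \cite{SCD}: in a Banach space with the Daugavet property, every SCD operator satisfies the Daugavet equation. Given a non-zero rank-one operator $S=x_0^\ast\otimes x_0\in\mathcal{L}(X)$, normalized so that $\|x_0\|=\|x_0^\ast\|=1$, I will consider
$$
T := S\otimes_\pi \Id_Y\in \mathcal{L}(X\hat{\otimes}_\pi Y).
$$
From the norm identity $\|S\otimes_\pi\Id_Y\|=\|S\|\,\|\Id_Y\|$ recalled in the introduction, $\|T\|=1$, and bilinearity of the tensor construction gives
$$
\Id_{X\hat{\otimes}_\pi Y}+T=(\Id_X+S)\otimes_\pi \Id_Y,
$$
so $\|\Id_{X\hat{\otimes}_\pi Y}+T\|=\|\Id_X+S\|$. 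Hence once I establish $\|\Id+T\|=1+\|T\|$ in $X\hat{\otimes}_\pi Y$, it will follow that $\|\Id_X+S\|=1+\|S\|$, and arbitrariness of $S$ will yield the Daugavet property of $X$.

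The heart of the proof is then to check that $T$ is an SCD operator, i.e.\ that $T(B_{X\hat{\otimes}_\pi Y})$ is an SCD subset of $X\hat{\otimes}_\pi Y$. The operator $T$ factors as $u\mapsto x_0\otimes \pi(u)$, where $\pi\colon X\hat{\otimes}_\pi Y\to Y$ is the bounded linear extension of the bilinear map $(x,y)\mapsto x_0^\ast(x)y$, of norm $\leq\|x_0^\ast\|=1$. In particular $T(B_{X\hat{\otimes}_\pi Y})\subset x_0\otimes B_Y$; choosing $\tilde x_n\in B_X$ with $x_0^\ast(\tilde x_n)\to 1$, one has $T(\tilde x_n\otimes y)=x_0^\ast(\tilde x_n)\,x_0\otimes y\to x_0\otimes y$ for every $y\in B_Y$, so $\overline{T(B_{X\hat{\otimes}_\pi Y})}=x_0\otimes B_Y$. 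By the last clause of the remark recalled in the text (SCD is preserved by any set sandwiched between an SCD set and its closed convex hull, hence in particular by a set with the same closure), it therefore suffices to show that $x_0\otimes B_Y$ itself is SCD in $X\hat{\otimes}_\pi Y$.

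The latter follows from the SCD hypothesis on $B_Y$ by lifting a countable determining family of slices. The map $y\mapsto x_0\otimes y$ is an isometric embedding of $Y$ into $X\hat{\otimes}_\pi Y$, and via $(X\hat{\otimes}_\pi Y)^\ast\equiv \mathcal{L}(X,Y^\ast)$ each functional $\Phi$ acts on $x_0\otimes y$ as $\Phi(x_0)(y)$; since every $y^\ast\in Y^\ast$ is realized as $\Phi(x_0)$ for a suitable $\Phi\in\mathcal{L}(X,Y^\ast)$ (e.g.\ $\Phi(x)=f(x)y^\ast$ with $f\in X^\ast$ satisfying $f(x_0)=1$), the slices of $x_0\otimes B_Y$ in $X\hat{\otimes}_\pi Y$ are in natural bijection with the slices of $B_Y$ in $Y$. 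Consequently a countable determining family $\{V_n\}$ of slices of $B_Y$ lifts to $\{x_0\otimes V_n\}$ for $x_0\otimes B_Y$, yielding SCD of the latter. The only genuine difficulty of the argument is this identification of $T$ as an SCD operator; once secured, combining the SCD/Daugavet theorem from \cite{SCD} with the Daugavet hypothesis on $X\hat{\otimes}_\pi Y$ gives $\|\Id+T\|=1+\|T\|$ and completes the proof.
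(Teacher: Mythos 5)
Your proof is correct and takes essentially the same route as the paper: reduce to showing that $T=S\otimes_\pi \Id_Y$ is an SCD-operator, invoke the result of \cite{SCD} that SCD operators on a space with the Daugavet property satisfy the Daugavet equation, and conclude via $\|\Id_{X\hat{\otimes}_\pi Y}+T\|=\|\Id_X+S\|$. The only cosmetic difference is that the paper packages the SCD verification as a slightly more general lemma (a rank-one operator tensored with any SCD-operator is SCD) and works with the sandwich $S(B_X)\otimes B_Y\subset T(B_{X\hat{\otimes}_\pi Y})\subset \overline{\conv}\bigl(S(B_X)\otimes B_Y\bigr)$, whereas you identify $\overline{T(B_{X\hat{\otimes}_\pi Y})}$ directly with $x_0\otimes B_Y$; the slice-lifting via $\varphi(x)=x_0^\ast(x)y^\ast$ is the same in both arguments.
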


We need the following preliminary result which shows that the projective tensor product of an SCD-operator and a rank-one operator is again an SCD-operator on a projective tensor product.

\begin{Lemm}\label{lemma-SCD-proj}
	Let $X$, $Y$ be Banach spaces, let $S\in\mathcal{L}(X)$ be a rank-one operator and let $T\in\mathcal{L}(Y)$ be an SCD-operator. Then $S\otimes_\pi T \in \mathcal{L}(X\hat{\otimes}_\pi Y)$ is an SCD-operator.
\end{Lemm}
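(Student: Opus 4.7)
The plan is to exploit the rank-one structure of $S$ to sandwich $(S\otimes_\pi T)(B_{X\hat\otimes_\pi Y})$ between an isometric copy of $T(B_Y)$ and its closed convex hull, and then to apply the SCD stability principle recalled just before the statement.

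Assume $S\neq 0$ (the case $S=0$ is trivial) and write $Sx=x^\ast(x)\,x_0$ with $x^\ast\in X^\ast$ and $x_0\in X$ of norm one. On elementary tensors $(S\otimes_\pi T)(x\otimes y)=x_0\otimes T(x^\ast(x)y)$, so $(S\otimes_\pi T)(B_X\otimes B_Y)=\|x^\ast\|\bigl(x_0\otimes T(B_Y)\bigr)=:A$. Since $B_{X\hat\otimes_\pi Y}=\overline{\conv}(B_X\otimes B_Y)$ and $S\otimes_\pi T$ is bounded and linear, taking closed convex hulls yields
$$
A\;\subseteq\;(S\otimes_\pi T)(B_{X\hat\otimes_\pi Y})\;\subseteq\;\overline{\conv}(A).
$$
By the quoted stability principle, it therefore suffices to prove that $A$ is SCD.

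The map $\phi\colon Y\longrightarrow X\hat\otimes_\pi Y$, $\phi(y)=x_0\otimes y$, is a linear isometric embedding, since $\|x_0\otimes y\|_\pi=\|x_0\|\,\|y\|=\|y\|$, and so $A=\|x^\ast\|\,\phi(T(B_Y))$. Nonzero scaling trivially preserves SCD, so I only need to check that SCD transfers through $\phi$. Given a countable determining family of slices $V_n=\operatorname{Slice}(T(B_Y),y_n^\ast,\delta_n)$ of $T(B_Y)$, I would extend each $y_n^\ast\in Y^\ast$ to some $z_n^\ast\in(X\hat\otimes_\pi Y)^\ast$ with $z_n^\ast(x_0\otimes y)=y_n^\ast(y)$; concretely, under $(X\hat\otimes_\pi Y)^\ast\equiv\mathcal{L}(X,Y^\ast)$, take the operator $x\mapsto \tilde x^\ast(x)\,y_n^\ast$ for any $\tilde x^\ast\in S_{X^\ast}$ with $\tilde x^\ast(x_0)=1$ (supplied by Hahn--Banach). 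A direct computation gives $\operatorname{Slice}(\phi(T(B_Y)),z_n^\ast,\delta_n)=\phi(V_n)$, and injectivity of $\phi$ transfers the determining property: any $B\subseteq\phi(T(B_Y))$ meeting every $\phi(V_n)$ pulls back to $B'\subseteq T(B_Y)$ meeting every $V_n$, so $T(B_Y)\subseteq\overline{\conv}(B')$ and hence, by continuity of $\phi$, $\phi(T(B_Y))\subseteq\overline{\conv}(B)$. Thus $\{\phi(V_n)\}_n$ is determining for $\phi(T(B_Y))$ and $A$ is SCD, which finishes the proof via the sandwich inequality above.

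The main obstacle is this transfer of SCD through the embedding $\phi$; the rest is routine unwinding of the rank-one form of $S$, the description $B_{X\hat\otimes_\pi Y}=\overline{\conv}(B_X\otimes B_Y)$, and the sandwich principle.
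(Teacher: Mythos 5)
Your proof is correct and follows essentially the same route as the paper's: sandwich $(S\otimes_\pi T)(B_{X\hat{\otimes}_\pi Y})$ between $S(B_X)\otimes T(B_Y)$ and its closed convex hull, use the rank-one form of $S$ to reduce to a scaled copy of $x_0\otimes T(B_Y)$, and transfer the determining slices through functionals of the form $x\mapsto x_0^\ast(x)\,y_n^\ast$. The only (harmless) imprecision is the identity $(S\otimes_\pi T)(B_X\otimes B_Y)=\|x^\ast\|\bigl(x_0\otimes T(B_Y)\bigr)$, which can fail when $x^\ast$ does not attain its norm; since the two sets have the same closure, the sandwich argument still applies (the paper deals with this by letting the scalar set $\Gamma$ be either $B_{\mathbb{K}}$ or its interior).
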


\begin{proof}
	We may and do assume that $\|S\|=\|T\|=1$. In order to prove that $[S\otimes_\pi T](B_{X\hat{\otimes}_\pi Y})$ is SCD it is enough to prove that $S(B_X)\otimes T(B_Y)$ is SCD as
	$$
	S(B_X)\otimes T(B_Y)\subset [S\otimes_\pi T](B_{X\hat{\otimes}_\pi Y})=[S\otimes_\pi T](\overline{\conv}(B_X\otimes B_Y))\subset\overline{\conv}\left(S(B_X)\otimes T(B_Y)\right).
	$$
	Since $S$ is a rank-one operator, there exist $x_0\in S_X$ and $\Gamma\subset \mathbb{K}$ such that $S(B_X)=\Gamma \{x_0\}$ ($\Gamma$ equals either $B_\mathbb{K}$ or its interior). So we can write $S(B_X) \otimes T(B_Y)=\{x_0\}\otimes\Gamma T(B_Y)$. Observe that $\Gamma T(B_Y)$ is SCD since $T(B_Y)$ is SCD and
	$$
	T(B_Y)\subset \overline{\Gamma T(B_Y)} \subset \overline{T(B_Y)}.	
	$$
	Therefore, for each $n\in \N$ we can find $V_n=\operatorname{Slice}(\Gamma T(B_Y),y_n^*,\eps_n)$ such that the sequence $\{V_n \colon n\in \N\}$ is determining for $\Gamma T(B_Y)$. Now fix $x_0^\ast\in S_{X^\ast}$ satisfying $\re x_0^\ast(x_0)=1$ and, for each $n\in \N$, define $\varphi_n\in (X\hat{\otimes}_\pi Y)^\ast=\mathcal{L}(X,Y^\ast)$ by $\varphi_n(x)=x_0^\ast(x) y_n^*$ for every $x\in X$. Let us prove that the slices
	$$
	S_n=\{x_0\}\otimes V_n=\operatorname{Slice}\left(\{x_0\}\otimes\Gamma T(B_Y),\varphi_n,\eps_n\right) \qquad (n\in\N)
	$$
	form a determining sequence for $\{x_0\}\otimes\Gamma T(B_Y)$. Indeed, if $B\subseteq\{x_0\}\otimes\Gamma T(B_Y)$ intersects all the $S_n$, then $B$ must be of the form $\{x_0\}\otimes B_2$ with $B_2\subset \Gamma T(B_Y)$ satisfying $B_2\cap V_n\neq\emptyset$ for every $n\in\N$. Since $V_n$ is determining for $\Gamma T(B_Y)$, this implies that $\Gamma T(B_Y)\subset\overline{\conv}(B_2)$ and thus
	$$
	\{x_0\}\otimes\Gamma T(B_Y)\subset\{x_0\}\otimes\overline{\conv}(B_2)\subset\overline{\conv}(B)
	$$
	which shows that the sequence $\{S_n\}$ is determining for $\{x_0\}\otimes\Gamma T(B_Y)=S(B_X)\otimes T(B_Y)$.
	\end{proof}
We are ready to show that the Daugavet property passes from the projective tensor product to one of the factors if the other one is SCD.

\begin{proof}[Proof of Theorem~\ref{theorem:Daugavet-tensorproduct}]
	Fix a rank-one operator $S\in\mathcal{L}(X)$ and consider $T=S\otimes_\pi \Id_Y\in\mathcal{L}(X\hat{\otimes}_\pi Y)$ which satisfies $\|T\|=\|S\|$ and is an SCD-operator by Lemma \ref{lemma-SCD-proj}. Since $X\hat{\otimes}_\pi Y$ has the Daugavet property, $T$ satifisfies the Daugavet equation by \cite[Corollary 5.9]{SCD}:
	$$
	\left\|\Id_{X\hat{\otimes}_\pi Y}+T\right\|=1+\|T\|=1+\|S\|.
	$$
	By the definition of $T$ we have
	$$
	\left\|\Id_{X\hat{\otimes}_\pi Y}+T\right\|=\left\|(\Id_X+S)\otimes_\pi \Id_Y\right\|=\left\|\Id_X+S\right\|
	$$
	and so $\|\Id_X+S\|=1+\|S\|$, as desired.
\end{proof}

We do not know whether the corresponding result for the injective tensor product is true or not. But we have the following positive result in the same line.

\begin{Prop}\label{prop:DPr-frechet-injective}
	Let $X$, $Y$ be Banach spaces such that $X\hat{\otimes}_\eps Y$ has the Daugavet property. Suppose that the norm of $Y$ is Fr\'{e}chet differentiable at a point $y_0\in S_Y$. Then, $X$ has the Daugavet property.
\end{Prop}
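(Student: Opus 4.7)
The plan is to reduce the Daugavet equation for an arbitrary rank-one operator on $X$ to the Daugavet equation for a canonically associated rank-one operator on $X\hat{\otimes}_\eps Y$, and then use Šmulyan's theorem (the dual reformulation of Fréchet differentiability at $y_0$) to transfer the conclusion back to $X$. By hypothesis there is a unique $y_0^\ast\in S_{Y^\ast}$ with $y_0^\ast(y_0)=1$, and every sequence $(\eta_n^\ast)\subset B_{Y^\ast}$ with $\re\eta_n^\ast(y_0)\to 1$ satisfies $\eta_n^\ast\to y_0^\ast$ in norm. Fix a rank-one operator $S\in\mathcal{L}(X)$; after normalising we may assume $\|S\|=1$ and write $Sx=x^\ast(x)z$ with $x^\ast\in S_{X^\ast}$ and $z\in S_X$. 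Associate to $S$ the rank-one operator $\tilde S\in\mathcal{L}(X\hat{\otimes}_\eps Y)$ given by
$$
\tilde S(u):=\langle x^\ast\otimes y_0^\ast,\,u\rangle\,(z\otimes y_0),
$$
whose norm equals $\|x^\ast\otimes y_0^\ast\|_{(X\hat{\otimes}_\eps Y)^\ast}\cdot\|z\otimes y_0\|_\eps=1$.

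Since $X\hat{\otimes}_\eps Y$ has the Daugavet property, $\|\Id+\tilde S\|=2$, so we may pick $u_n\in B_{X\hat{\otimes}_\eps Y}$ with $\|u_n+\tilde S(u_n)\|_\eps\to 2$. Set $\alpha_n:=\langle x^\ast\otimes y_0^\ast,u_n\rangle$ and $v_n:=u_n+\alpha_n\,z\otimes y_0$, so $|\alpha_n|\to 1$. Consider the contraction $P\colon X\hat{\otimes}_\eps Y\longrightarrow X$ obtained by continuous extension of $P(x\otimes y):=y_0^\ast(y)\,x$ (equivalently, the action of $y_0^\ast$ under the inclusion $X\hat{\otimes}_\eps Y\hookrightarrow\mathcal{L}(Y^\ast,X)$). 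Then $x_n:=Pu_n\in B_X$ satisfies $x^\ast(x_n)=\alpha_n$, hence $S(x_n)=\alpha_n z$, and
$$
x_n+S(x_n)=Pu_n+\alpha_n\,P(z\otimes y_0)=Pv_n.
$$
The trivial estimate $\|Pv_n\|\leq\|v_n\|_\eps$ goes in the wrong direction; the core of the proof is to establish $\|Pv_n\|\to 2$.

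This is the main obstacle and is where Fréchet differentiability enters decisively. Choose $\xi_n^\ast\in B_{X^\ast}$ and $\eta_n^\ast\in B_{Y^\ast}$ with $|\langle\xi_n^\ast\otimes\eta_n^\ast,v_n\rangle|\to 2$, and expand
$$
\langle\xi_n^\ast\otimes\eta_n^\ast,v_n\rangle=\langle\xi_n^\ast\otimes\eta_n^\ast,u_n\rangle+\alpha_n\,\xi_n^\ast(z)\,\eta_n^\ast(y_0).
$$
As the modulus of the sum tends to $2$ while each summand has modulus at most $1$, both summands have modulus tending to $1$; in particular $|\eta_n^\ast(y_0)|\to 1$. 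Choose unimodular $\tau_n$ so that $(\tau_n\eta_n^\ast)(y_0)=|\eta_n^\ast(y_0)|\to 1$; Šmulyan's theorem then yields $\tau_n\eta_n^\ast\to y_0^\ast$, i.e.\ $\|\eta_n^\ast-\bar\tau_n y_0^\ast\|\to 0$. Writing $\eta_n^\ast=\bar\tau_n y_0^\ast+(\eta_n^\ast-\bar\tau_n y_0^\ast)$, the contribution of the second summand to $\langle\xi_n^\ast\otimes\eta_n^\ast,v_n\rangle$ is bounded by $\|\eta_n^\ast-\bar\tau_n y_0^\ast\|\,\|v_n\|_\eps=o(1)$, whereas the first yields $\bar\tau_n\,\xi_n^\ast(Pv_n)$; hence
$$
|\langle\xi_n^\ast\otimes\eta_n^\ast,v_n\rangle|\leq|\xi_n^\ast(Pv_n)|+o(1)\leq\|Pv_n\|+o(1),
$$
forcing $\|Pv_n\|\to 2$. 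Consequently $\|x_n+S(x_n)\|=\|Pv_n\|\to 2=1+\|S\|$, so $S$ satisfies the Daugavet equation, and since $S$ was an arbitrary rank-one operator on $X$, we conclude that $X$ has the Daugavet property.
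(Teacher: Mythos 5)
Your proof is correct, and it takes a genuinely different route from the paper's, which works with functionals rather than operators. There, one fixes $x_0\in B_X$ and $x_0^*\in S_{X^\ast}$, applies the dual slice characterization of the Daugavet property (Lemma~\ref{lemma:Daugavet}(iii)) in $X\hat{\otimes}_\eps Y$ to the pair $u_0=x_0\otimes y_0$, $\varphi_0=x_0^*\otimes y_0^*$, uses $B_{(X\hat{\otimes}_\eps Y)^\ast}=\overline{\conv}^{w^\ast}(B_{X^\ast}\otimes B_{Y^\ast})$ to take the resulting functional in the form $x^*\otimes y^*$, and then invokes the strong exposedness of $y_0^*$ by $y_0$ to replace $y^*$ by $y_0^*$ and read off $\|x_0^*+x^*\|>2-2\eps$. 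You instead verify the Daugavet equation for rank-one operators directly, lifting $S$ to the rank-one operator $\tilde S$ on the tensor product and pushing back down with the slice map $P=\Id_X\otimes y_0^*$ (whose contractivity is exactly the computation the paper performs in the proof of Theorem~\ref{num-index-tensor}(b)); \v{S}mulyan's criterion enters in the same essential way in both arguments, namely to force the $Y$-component of near-norming elementary functionals to collapse onto a unimodular multiple of $y_0^*$. What your version buys is that you never need to extract an elementary tensor from a slice of the weak$^\ast$-closed convex hull $B_{(X\hat{\otimes}_\eps Y)^\ast}$ --- you only use that $\|\cdot\|_\eps$ is computed against elementary tensor functionals --- at the cost of being longer and of relying on the standard (but worth stating explicitly) reduction that the Daugavet equation for norm-one rank-one operators implies it for all rank-one operators. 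Both arguments are of comparable depth; the paper's is shorter because Lemma~\ref{lemma:Daugavet} packages the operator-to-slice translation once and for all.
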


We need the following characterization of the Daugavet property which appears in the seminal paper \cite{KSSW}.

\begin{Lemm}[\mbox{\cite[Lemma~2.2]{KSSW}}]\label{lemma:Daugavet}
Let $X$ be a Banach space. Then the following assertions are equivalent:
\begin{itemize}
\item[(i)] $X$ has the Daugavet property;
\item[(ii)] for every $x\in S_X$, $x^*\in S_{X^\ast}$ and $\eps>0$, there is $y\in \operatorname{Slice}(S_X,x^*,\eps)$ such that $\|x+y\|>2-\eps$;
\item[(iii)] for every $x\in S_X$, $x^*\in S_{X^\ast}$ and $\eps>0$, there is $y^*\in \operatorname{Slice}(S_{X^\ast},x,\eps)$ such that $\|x^*+y^*\|>2-\eps$.
\end{itemize}
\end{Lemm}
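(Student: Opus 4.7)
The plan is to establish the cycle (i)$\Rightarrow$(ii)$\Leftrightarrow$(iii)$\Rightarrow$(i), linking (i) and (ii) through a rank-one test operator and (ii) and (iii) through Hahn--Banach duality. Throughout I use that the slices in the statement are governed by real parts of the defining functionals.

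For (i)$\Rightarrow$(ii), fix $x\in S_X$, $x^*\in S_{X^*}$ and $\eps>0$, and test the Daugavet equation against the norm-one rank-one operator $Tz:=x^*(z)\,x$. Then $\|\Id+T\|=2$ furnishes $u\in S_X$ with $\|u+x^*(u)x\|>2-\eps/2$. In the complex case, after replacing $u$ by a unimodular multiple to absorb the phase of $x^*(u)$, I may suppose $\lambda:=x^*(u)\geq 0$; the elementary estimate $\|u+\lambda x\|\leq 1+\lambda$ then forces $\lambda>1-\eps/2$, so $u\in\operatorname{Slice}(S_X,x^*,\eps)$, and
\[
\|x+u\|\geq\|\lambda x+u\|-(1-\lambda)>2-\eps.
\]
Hence $y:=u$ witnesses (ii).

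For (ii)$\Leftrightarrow$(iii), assume (ii) and fix $x,x^*,\eps$. Apply (ii) with parameter $\eps/2$ to obtain $y\in S_X$ with $\re x^*(y)>1-\eps/2$ and $\|x+y\|>2-\eps/2$. Choosing a Hahn--Banach norming functional $y^*\in S_{X^*}$ for $x+y$, the identity $\re y^*(x)+\re y^*(y)=\|x+y\|>2-\eps/2$ together with each summand being at most $1$ forces both real parts above $1-\eps/2$. Thus $y^*\in\operatorname{Slice}(S_{X^*},x,\eps)$ and $\|x^*+y^*\|\geq\re(x^*+y^*)(y)>2-\eps$, proving (iii). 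The converse (iii)$\Rightarrow$(ii) is formally symmetric, except that instead of Hahn--Banach one directly selects $y\in S_X$ that almost norms $x^*+y^*$.

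For (iii)$\Rightarrow$(i), it suffices to verify the Daugavet equation for a rank-one $T=x^*\otimes x_0$ with $\|x^*\|=\|x_0\|=1$, since scaling handles the general case. Apply (iii) to $x_0,x^*$ with a small parameter $\eta$ to get $y^*\in S_{X^*}$ with $\re y^*(x_0)>1-\eta$ and $\|x^*+y^*\|>2-\eta$, then pick $y\in S_X$ with $\re(x^*+y^*)(y)>2-2\eta$, so that both $\re x^*(y)$ and $\re y^*(y)$ exceed $1-2\eta$. Using $\|w\|\geq\re y^*(w)$,
\[
\|\Id+T\|\geq\|y+Ty\|\geq\re y^*\bigl(y+x^*(y)x_0\bigr)=\re y^*(y)+\re\bigl(x^*(y)\,y^*(x_0)\bigr).
\]
A scalar $z$ with $|z|\leq 1$ and $\re z>1-2\eta$ satisfies $|z-1|=O(\sqrt{\eta})$, since $|\im z|\leq\sqrt{1-(1-2\eta)^2}=O(\sqrt{\eta})$; applying this to $x^*(y)$ and $y^*(x_0)$ shows that $\re\bigl(x^*(y)\,y^*(x_0)\bigr)>1-O(\sqrt{\eta})$, and therefore $\|\Id+T\|>2-O(\sqrt{\eta})$. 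Letting $\eta\to 0$ gives $\|\Id+T\|=2$, which is (i). The main subtlety is the complex-case unimodular rotation in (i)$\Rightarrow$(ii) together with the associated imaginary-part bookkeeping needed to control $\re\bigl(x^*(y)\,y^*(x_0)\bigr)$ in the last step; everything else reduces to standard Hahn--Banach and triangle-inequality manipulations.
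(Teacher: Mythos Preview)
The paper does not prove this lemma at all: it is quoted verbatim from \cite[Lemma~2.2]{KSSW} and used as a black box, so there is no ``paper's own proof'' to compare against. Your argument is essentially the original Kadets--Shvidkoy--Sirotkin--Werner proof, and it is correct.

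Two small remarks. First, in (iii)$\Rightarrow$(i) your phrase ``scaling handles the general case'' is misleading: knowing $\|\Id+S\|=2$ for norm-one rank-one $S$ does \emph{not} immediately give $\|\Id+cS\|=1+c$ for $0<c<1$ by a scaling trick. What actually works is that your estimate $\|y+Ty\|\geq \re y^*(y)+c\,\re\bigl(x^*(y)\,y^*(x_0)\bigr)$ already carries the constant $c=\|T\|$, so the same computation gives $\|\Id+T\|\geq 1+c$ directly; no reduction to $c=1$ is needed. Second, in (iii)$\Rightarrow$(ii) you should spell out that $y$ only \emph{almost} norms $x^*+y^*$, which costs an extra $\eta$; you acknowledge this, but the bookkeeping should be made explicit (use parameter $\eps/3$ rather than $\eps/2$, for instance). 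Neither point affects the validity of the argument.
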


\begin{proof}[Proof of Proposition~\ref{prop:DPr-frechet-injective}]
	Since the norm of $Y$ is Fr\'{e}chet differentiable at $y_0\in S_Y$, there is a unique $y_0^*\in S_{Y^\ast}$ which is strongly exposed in $B_{Y^\ast}$ by $y_0$, that is,
	\begin{equation}\label{strongly-exposed}
	\forall\eps>0 \ \exists\delta>0 \colon  y^*\in B_{Y^\ast}, \ \re y^*(y_0)>1-\delta \Longrightarrow \|y_0^*-y^*\|<\eps.
	\end{equation}
	Given $x_0^*\in S_{X^\ast}$ and $x_0\in B_X$, we consider $u_0=x_0\otimes y_0 \in B_{X\hat{\otimes}_\eps Y}$ and $\varphi_0=x_0^*\otimes y_0^*\in S_{(X\hat{\otimes}_\eps Y)^\ast}$. Since $X\hat{\otimes}_\eps Y$ has the Daugavet property, by Lemma~\ref{lemma:Daugavet}, fixed $\eps>0$, we may find $\varphi\in \operatorname{Slice}\left(B_{(X\hat{\otimes}_\eps Y)^\ast},u_0,\delta\right)$ such that $\|\varphi_0+\varphi\|>2-\eps$. As $B_{(X\hat{\otimes}_\eps Y)^\ast}= \overline{\conv}^{w^\ast}(B_{X^\ast}\otimes B_{Y^\ast})$, we may suppose that $\varphi=x^*\otimes y^*$ with $x^*\in B_{X^\ast}$ and $y^*\in B_{Y^\ast}$. On the one hand, from $\varphi\in \operatorname{Slice}\left(B_{(X\hat{\otimes}_\eps Y)^\ast},u_0,\delta\right)$ it follows that $x^*\in \operatorname{Slice}(B_{X^\ast},x_0,\delta)$ and $y^*\in \operatorname{Slice}(B_{Y^\ast},y_0,\delta)$. On the other hand, we can write
	$$
	2-\eps<\|\varphi_0+\varphi\|\leq\|x_0^*\otimes y_0^*+x^*\otimes y_0^*\| + \|x^*\otimes y_0^*-x^*\otimes y^*\|\leq\|x_0^*+x^*\|+\|y_0^*-y^*\|.
	$$
But $\|y_0^*-y^*\|<\eps$ by \eqref{strongly-exposed}, so we deduce that $\|x_0^*+x^*\|>2-2\eps$. Now, $X$ has the Daugavet property by Lemma~\ref{lemma:Daugavet}.
\end{proof}

We can obtain a result similar to the previous one for the projective tensor product which does not follow from Theorem~\ref{theorem:Daugavet-tensorproduct}.

\begin{Prop}\label{prop:DPr-frechet-projective}
	Let $X$, $Y$ be Banach spaces such that $X\hat{\otimes}_\pi Y$ has the Daugavet property. Suppose that the norm of $Y^\ast$ is Fr\'{e}chet differentiable at a point $y_0^*\in S_{Y^\ast}$. Then, $X$ has the Daugavet property.
\end{Prop}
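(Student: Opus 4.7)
The plan is to mimic the proof of Proposition~\ref{prop:DPr-frechet-injective} using Lemma~\ref{lemma:Daugavet}(iii) applied inside $X\hat{\otimes}_\pi Y$ together with the identification $(X\hat{\otimes}_\pi Y)^*\equiv\mathcal{L}(X,Y^*)$. First I would record the standard reformulation of the Fr\'{e}chet differentiability of $\|\cdot\|_{Y^*}$ at $y_0^*$: there is a unique $y_0^{**}\in S_{Y^{**}}$ with $y_0^{**}(y_0^*)=1$, and $y_0^*$ strongly exposes $B_{Y^{**}}$ at $y_0^{**}$, that is,
\begin{equation*}
\forall\,\eps'>0\ \exists\,\delta'>0\colon\quad y^{**}\in B_{Y^{**}},\ \re y^{**}(y_0^*)>1-\delta' \ \Longrightarrow\ \|y^{**}-y_0^{**}\|<\eps'.
\end{equation*}
Choosing $y_0\in S_Y$ with $\re y_0^*(y_0)>1-\delta'$ and applying this to the canonical image $\hat{y_0}\in B_{Y^{**}}$ will give $\|\hat{y_0}-y_0^{**}\|<\eps'$, so $y_0^{**}$ acts on $Y^*$ like evaluation at $y_0$ up to an error of $\eps'$.

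Given $x_0\in S_X$, $x_0^*\in S_{X^*}$ and $\eps>0$, I would set $u_0:=x_0\otimes y_0\in S_{X\hat{\otimes}_\pi Y}$ and $\varphi_0:=x_0^*\otimes y_0^*\in S_{(X\hat{\otimes}_\pi Y)^*}$, viewing $\varphi_0$ as the rank-one operator $x\mapsto x_0^*(x)\,y_0^*$ in $\mathcal{L}(X,Y^*)$. Since $X\hat{\otimes}_\pi Y$ has the Daugavet property, Lemma~\ref{lemma:Daugavet}(iii) produces $\psi\in B_{\mathcal{L}(X,Y^*)}$ with $\re\psi(u_0)>1-\delta$ and $\|\varphi_0+\psi\|>2-\delta$. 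I would then propose as candidate the functional
$$
y^*(x):=y_0^{**}(\psi(x))\qquad (x\in X),
$$
which lies in $B_{X^*}$ since $\|y_0^{**}\|\,\|\psi\|\leq 1$, and aim to check that $y^*$ witnesses Lemma~\ref{lemma:Daugavet}(iii) for $X$.

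The slice condition is immediate: $\re y^*(x_0)=\re y_0^{**}(\psi(x_0))$ differs from $\re\hat{y_0}(\psi(x_0))=\re(\psi(x_0))(y_0)=\re\psi(u_0)>1-\delta$ by at most $\|\hat{y_0}-y_0^{**}\|\leq\eps'$, hence $\re y^*(x_0)>1-\delta-\eps'$. For the norm condition, I would pick $x\in S_X$ with $\|x_0^*(x)\,y_0^*+\psi(x)\|_{Y^*}>2-\delta$ and then replace $x$ by a unimodular scalar multiple (which preserves this norm) so that $\alpha:=x_0^*(x)\geq 0$ is real. Next I would select $y^{**}\in B_{Y^{**}}$ with $\re y^{**}(\alpha y_0^*+\psi(x))>2-\delta$. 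Since $\alpha\,\re y^{**}(y_0^*)\leq 1$ and $\re y^{**}(\psi(x))\leq 1$ while their sum exceeds $2-\delta$, each exceeds $1-\delta$; together with $\alpha\leq 1$ this forces $\alpha>1-\delta$ and $\re y^{**}(y_0^*)>1-\delta$. Strong exposure then yields $\|y^{**}-y_0^{**}\|<\eps'$, so
$$
\re(x_0^*+y^*)(x)=\alpha+\re y_0^{**}(\psi(x))\geq\alpha+\re y^{**}(\psi(x))-\eps'>(1-\delta)+(1-\delta-\eps'),
$$
which is $>2-\eps$ once $\delta$ and $\eps'$ are chosen suitably small. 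Lemma~\ref{lemma:Daugavet} then gives the Daugavet property of $X$.

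The main obstacle will be the rotation step: the Fr\'{e}chet modulus controls $\|y^{**}-y_0^{**}\|$ only when the \emph{real part} $\re y^{**}(y_0^*)$, not merely $|y^{**}(y_0^*)|$, is close to $1$. Making $\alpha=x_0^*(x)\geq 0$ real by a unimodular rotation of $x$ is what allows the splitting $\re y^{**}(\alpha y_0^*)=\alpha\,\re y^{**}(y_0^*)$ to force $\re y^{**}(y_0^*)$ into the strong exposure neighborhood of $y_0^{**}$; the remainder is bookkeeping of $\eps$-tolerances through the Fr\'{e}chet modulus.
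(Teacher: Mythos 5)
Your proof is correct, but it takes a genuinely different route from the paper's. The paper stays on the primal side: it applies Lemma~\ref{lemma:Daugavet}(ii) in $X\hat{\otimes}_\pi Y$, uses $B_{X\hat{\otimes}_\pi Y}=\overline{\conv}(B_X\otimes B_Y)$ to produce an elementary tensor $u=x\otimes y$ in the slice determined by $\varphi_0=x_0^*\otimes y_0^*$ with $\|u_0+u\|>2-\eps$, and then invokes the refinement of \v{S}mulyan's lemma that places the point strongly exposed by $y_0^*$ inside $Y$ itself, so that $\|y-y_0\|<\eps$ and $2-\eps<\|x_0+x\|+\|y-y_0\|$ finishes the argument. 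You instead work on the dual side with Lemma~\ref{lemma:Daugavet}(iii) and the identification $(X\hat{\otimes}_\pi Y)^*\equiv\mathcal{L}(X,Y^*)$; since $B_{\mathcal{L}(X,Y^*)}$ is \emph{not} the closed convex hull of elementary tensors, you cannot decompose the functional $\psi$ the way the paper decomposes $u$, and the composition $y^*=y_0^{**}\circ\psi$ is precisely the device that replaces that decomposition. Your rotation making $\alpha=x_0^*(x)\geq 0$ real, the splitting of $\re y^{**}(\alpha y_0^*+\psi(x))>2-\delta$ into two terms each exceeding $1-\delta$, and the resulting bounds $\alpha>1-\delta$ and $\re y^{**}(y_0^*)>1-\delta$ are all sound, and the tolerances close once $\delta\leq\delta'(\eps')$ and $2\delta+\eps'<\eps$. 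What the paper's route buys is brevity and complete avoidance of the bidual; what yours buys is a proof that the dual characterization of the Daugavet property is still usable even though the dual ball of a projective tensor product has no convenient tensor description. The only loose ends in your write-up are routine normalizations: Lemma~\ref{lemma:Daugavet}(iii) asks for $y^*$ in a slice of $S_{X^\ast}$ rather than $B_{X^\ast}$, which is repaired by passing to $y^*/\|y^*\|$ since $\|y^*\|\geq\re y^*(x_0)>1-\delta-\eps'$.
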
	

\begin{proof}
	Since $y_0^*\in S_{Y^\ast}$ is a point of Fr\'{e}chet differentiability, there is a unique $y_0\in S_{Y}$ satisfying:
	\begin{equation}\label{*strongly-exposed}
	\forall\eps>0 \ \exists\delta>0 \colon  y\in B_{Y}, \ \re y_0^*(y)>1-\delta \Longrightarrow \|y_0-y\|<\eps.
	\end{equation}
	Given $x_0\in S_{X}$ and $x_0^*\in B_{X^\ast}$, we consider $u_0=x_0\otimes y_0 \in S_{X\hat{\otimes}_\pi Y}$ and $\varphi_0=x_0^*\otimes y_0^*\in B_{(X\hat{\otimes}_\pi Y)^\ast}$. Since $X\hat{\otimes}_\pi Y$ has the Daugavet property and $B_{X\hat{\otimes}_\pi Y}=\overline{\conv}(B_X\otimes B_Y)$, fixed $\eps>0$, we may find $u\in \operatorname{Slice}\left(B_{X\hat{\otimes}_\pi Y},\varphi_0,\delta\right)$ of the form $u=x\otimes y$ with $x\in B_X$ and $y\in B_Y$ such that $\|u_0+u\|>2-\eps$. On the one hand, from $u\in \operatorname{Slice}\left(B_{X\hat{\otimes}_\pi Y},\varphi_0,\delta\right)$ it follows that $x\in \operatorname{Slice}(B_X,x_0^*,\delta)$ and $y\in \operatorname{Slice}(B_Y,y_0^*,\delta)$. On the other hand, we have
	$$
	2-\eps<\|u_0+u\|\leq\|x_0\otimes y_0+x\otimes y_0\|+\|x\otimes y-x\otimes y_0\|\leq\|x_0+x\|+\|y-y_0\|.
	$$
But $\|y-y_0\|<\eps$ by \eqref{*strongly-exposed}, so $\|x_0+x\|>2-2\eps$. Now, $X$ has the Daugavet property by Lemma~\ref{lemma:Daugavet}.
\end{proof}

\vspace*{0.5cm}

\noindent \textbf{Acknowledgement:\ } The authors thank Abraham Rueda Zoca for many conversations on the topic of this manuscript.

\end{document}